\documentclass{amsart}

\usepackage{amsmath}
\usepackage{amssymb}
\usepackage{amsthm}
\usepackage{mathscinet}
\usepackage{graphicx}
\usepackage{enumitem}

\theoremstyle{plain}
\newtheorem{thm}{Theorem}[section]
\newtheorem{prop}[thm]{Proposition}
\newtheorem{lem}[thm]{Lemma}
\newtheorem{cor}[thm]{Corollary}

\theoremstyle{definition}
\newtheorem{defn}[thm]{Definition}
\newtheorem{claim}{Claim}[thm]

\newtheorem{question}{Question}
\newtheorem{example}{Example}

\begin{document}

\title[Radial departures]{Radial departures and plane embeddings\\of arc-like continua}
\author{Andrea Ammerlaan \and Ana Anu\v{s}i\'{c} \and Logan C. Hoehn}
\date{\today}

\address{Nipissing University, Department of Computer Science \& Mathematics, 100 College Drive, Box 5002, North Bay, Ontario, Canada, P1B 8L7}
\email{ajammerlaan879@my.nipissingu.ca}
\email{anaa@nipissingu.ca}
\email{loganh@nipissingu.ca}

\dedicatory{Dedicated to the memory of Piotr Minc}

\thanks{This work was supported by NSERC grant RGPIN-2019-05998}

\subjclass[2020]{Primary 54F15, 54C25; Secondary 54F50}
\keywords{Plane embeddings; accessible point; arc-like continuum}

\begin{abstract}
We study the problem of Nadler and Quinn from 1972, which asks whether, given an arc-like continuum $X$ and a point $x \in X$, there exists an embedding of $X$ in $\mathbb{R}^2$ for which $x$ is an accessible point.  We develop the notion of a radial departure of a map $f \colon [-1,1] \to [-1,1]$, and establish a simple criterion in terms of the bonding maps in an inverse system on intervals to show that there is an embedding of the inverse limit for which a given point is accessible.  Using this criterion, we give a partial affirmative answer to the problem of Nadler and Quinn, under some technical assumptions on the bonding maps of the inverse system.
\end{abstract}

\maketitle

\section{Introduction}
\label{sec:intro}

In this paper we study plane embeddings of arc-like continua and their accessible sets.  The question of understanding properties of planar embeddings of arc-like continua has generated significant interest over the years, see for example \cite{mazurkiewicz1929, brechner1978, lewis1981, mayer1982, mayer1983, minc-transue1992, debski-tymchatyn1993, minc1997, anusic-bruin-cinc2017, anderson-choquet1959, ozbolt2020}.  We are motivated by the question of Nadler and Quinn from 1972 (\cite[p.229]{nadler1972} and \cite{nadler-quinn1972}), which asks whether for every arc-like continuum $X$, and every $x \in X$, there is an embedding $\Omega$ of $X$ in the plane $\mathbb{R}^2$ such that $x$ is an \emph{accessible} point, i.e.\ such that there is an arc $A \subset \mathbb{R}^2$ with $A \cap \Omega(X) = \{\Omega(x)\}$ (for other definitions and notation, see the Preliminaries section below).  The Nadler-Quinn problem has appeared in various compilations of continuum theory problems, including as Problem~140 in \cite{lewis1983}, as Question~16 (by J.C.\ Mayer, for indecomposable continua) in \cite{problems2002}, and as Problem~1 (by H.\ Bruin, J.\ \v{C}in\v{c} and the second author), Problem~48 (by W.\ Lewis), and Problem~50 (by P.\ Minc, for indecomposable continua) in \cite{problems2018}, but to this day remains open.

An alternative formulation of the Nadler-Quinn question is as follows.  Given an arc-like continuum $X$ and a point $x \in X$, we can construct a (simple-triod-like) continuum $X_x = X \cup A$, where $A$ is an arc such that $A \cap X = \{x\}$.  Then the question of whether $X$ can be embedded in the plane so as to make $x$ accessible is equivalent to the question of whether this continuum $X_x$ can be embedded in the plane.  In this way, the Nadler-Quinn problem asks whether all spaces from this class of triod-like continua can be embedded in the plane.  Answering this would be a step forward towards understanding which tree-like continua can be embedded in the plane, which is a question of central importance in the field.

P.\ Minc identified a particular, simple example of an arc-like continuum $X_M = \varprojlim \left \langle [0,1],f_M \right \rangle$, where $f_M$ is the piecewise-linear map shown in Figure~\ref{fig:minc}, and asked (see \cite[Question~19, p.335]{problems2002} and \cite[p.297]{problems2018}) whether $X$ can be embedded in the plane so as to make the point $\langle \frac{1}{2},\frac{1}{2},\ldots \rangle \in X$ accessible.  This was recently answered affirmatively by the second author in \cite{anusic2021}, who proved more generally that for any simplicial, locally eventually onto map $f \colon [0,1] \to [0,1]$, and any point $x \in X = \varprojlim \left \langle [0,1],f \right \rangle$, there is an embedding of $X$ in the plane for which $x$ accessible.  In this paper we give another partial affirmative answer to the question of Nadler and Quinn, for a different class of inverse systems, where in particular the bonding maps are not assumed to be all the same map.

\begin{figure}
\begin{center}
\includegraphics{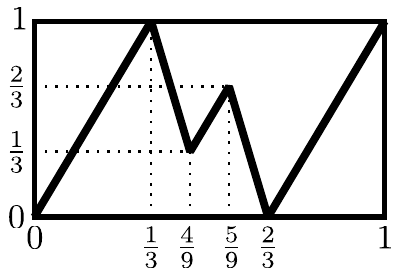}
\end{center}

\caption{The map $f_M$ defined by P.\ Minc.}
\label{fig:minc}
\end{figure}

With the use of the Anderson-Choquet Embedding Theorem \cite{anderson-choquet1959}, in \cite{anusic-bruin-cinc2020} Bruin, \v{C}in\v{c}, and the second author prove that given a point $x = \langle x_1,x_2,\ldots \rangle$ in an arc-like continuum $X = \varprojlim \left \langle [0,1],f_n \right \rangle$, there is an embedding of $X$ in the plane for which $x$ is accessible as long as, loosely speaking, $x_n$ is not in a ``zig-zag'' pattern in $f_n$.  The approach taken in \cite{anusic2021} is to find an alternative inverse limit representation of the arc-like continuum $X$ for which the coordinates of the point $x$ are not in such ``zig-zags''.  In this paper, we continue to explore this approach, and introduce a standard factorization of an interval map, the \emph{radial contour factorization}, as a means to find alternative inverse limit representations of arc-like continua.  We break the notion of a ``zig-zag'' into two more fundamental parts, namely \emph{radial departures} (of opposite orientations).

This paper is organized as follows.  After some preliminary definitions and notation, in Section~\ref{sec:prelim} we give a simple reduction of the question of Nadler and Quinn to the case of the point $x = \langle 0,0,\ldots \rangle$ in an inverse limit space $X = \varprojlim \left \langle [-1,1],f_n \right \rangle$, where $f_n(0) = 0$ for each $n$.  In Section~\ref{sec:contour} we introduce the (``one-sided'') notions of \emph{departures} and the \emph{contour factorization} of a map $f \colon [0,1] \to [-1,1]$.  We then introduce the notion of a \emph{radial departure} in Section~\ref{sec:rad deps}, and in Section~\ref{sec:embeddings} we show that there is an embedding of $X = \varprojlim \left \langle [-1,1],f_n \right \rangle$ in the plane for which $\langle 0,0,\ldots \rangle$ is accessible as long as for each $n$, $f_n$ does not contain both positive and negative radial departures.  In Section~\ref{sec:rad contour} we introduce the \emph{radial contour factorization}, which we use in Section~\ref{sec:no twins} to give a partial affirmative answer (Theorem~\ref{thm:no twins}) to the Nadler-Quinn problem under certain assumptions on the bonding maps $f_n$.

\section{Preliminaries}
\label{sec:prelim}

Throughout this paper, the word \emph{map} will always mean a continuous function.  A \emph{continuum} is a compact, connected metric space.

An \emph{inverse sequence} is a sequence of spaces and maps
\[ \langle X_n,f_n \rangle = \langle X_n,f_n \rangle_n = \langle X_1,f_1,X_2,f_2,X_3,\ldots \rangle \]
where for each $n$, $f_n$ is a map from $X_{n+1}$ to $X_n$.  The spaces $X_n$ are called \emph{factor spaces} and the maps $f_n$ are called \emph{bonding maps}.  If $S$ is an infinite subset of $\mathbb{N}$, then $\langle X_n,f_n \rangle_{n \in S}$ refers to the inverse system $\langle X_{n_k},f_{n_k} \rangle_k$, where $\{n_k: k \in \mathbb{N}\}$ is an increasing enumeration of $S$.  In this situation, it is understood that for each $n \in S$, $f_n$ is a map from $X_{n'}$ to $X_n$, where $n'$ is the smallest element of $S$ which is greater than $n$.

The \emph{inverse limit} of an inverse sequence is the set
\[ \varprojlim \langle X_n,f_n \rangle = \left\{ \langle x_n \rangle_{n=1}^\infty: f_n(x_{n+1}) = x_n \textrm{ for each } n\right\} \]
considered as a subspace of the product $\prod_{n=1}^\infty X_n$.  We record two standard properties of inverse limits:
\begin{itemize}
\item (Dropping finitely many coordinates) For any $n_0 \in \mathbb{N}$, $\varprojlim \langle X_n,f_n \rangle \approx \varprojlim \langle X_n,f_n \rangle_{n \geq n_0}$.
\item (Composing bonding maps) Let $S$ be an infinite subset of $\mathbb{N}$.  Given $n \in S$, let $n'$ be the smallest element of $S$ which is greater than $n$, and define $f_n^\circ = f_n \circ f_{n+1} \circ \cdots \circ f_{n'-1}$, which is a map from $X_{n'}$ to $X_n$.  Then $\varprojlim \langle X_n,f_n \rangle \approx \varprojlim \langle X_n,f_n^\circ \rangle_{n \in S}$.
\end{itemize}

A map $f \colon [-1,1] \to [-1,1]$ is \emph{piecewise-linear} if there is a finite set $S \subset [-1,1]$ with $-1,1 \in S$ such that on the intervals between points of $S$, $f$ is linear.  An inverse system $\langle [-1,1],f_n \rangle$ is \emph{simplicial} if there exist finite sets $S_1,S_2,\ldots$ such that for each $n$, $-1,1 \in S_n$, $f_n(S_{n+1}) \subseteq S_n$, and if $I$ is any component of $[-1,1] \smallsetminus S_{n+1}$ then $f_n$ is either constant on $I$, or $f_n$ is linear on $I$ and $f_n(I) \cap S_n = \emptyset$.

A continuum $X$ is \emph{arc-like} (equivalently, \emph{chainable}) if it is homeomorphic to an inverse limit of arcs; that is, if there exist maps $f_n \colon [-1,1] \to [-1,1]$, for $n = 1,2,\ldots$, such that $X \approx \varprojlim \left \langle [-1,1],f_n \right \rangle$.  It is well-known (see e.g.\ \cite{brown1960}) that any arc-like continuum is homeomorphic to the inverse limit of an inverse system $\langle [-1,1],f_n \rangle$ where each bonding map $f_n$ is piecewise-linear.

Let $f_n \colon [-1,1] \to [-1,1]$, $n = 1,2,\ldots$, be a sequence of (piecewise-linear) maps and let $x = \langle x_1,x_2,\ldots \rangle \in X = \varprojlim \left \langle [-1,1],f_n \right \rangle$.  It is well-known that if $x_n = \pm 1$ for infinitely many $n$, then $x$ is an endpoint of $X$ and one can easily embed $X$ in $\mathbb{R}^2$ so as to make $x$ an accessible point (see e.g.\ \cite[p.295]{problems2018}).  Hence we may as well assume that $x_n \neq \pm 1$ for all but finitely many $n$, and in fact by dropping finitely many coordinates we may assume that $x_n \neq \pm 1$ for all $n$.  Then for each $n$, let $h_n \colon [-1,1] \to [-1,1]$ be a (piecewise-linear) homeomorphism such that $h_n(x_n) = 0$, and define $f_n' = h_n \circ f_n \circ h_{n+1}^{-1}$.  Then $\varprojlim \left \langle [-1,1],f_n' \right \rangle \approx X$, and in fact an explicit homeomorphism $h \colon X \to \varprojlim \left \langle [-1,1],f_n' \right \rangle$ is given by $h \left( \langle y_n \rangle_{n=1}^\infty \right) = \langle h_n(y_n) \rangle_{n=1}^\infty$, and clearly $h(x) = \langle 0,0,\ldots \rangle$.  Thus the question of Nadler and Quinn reduces to:

\begin{question}
\label{ques:0 accessible}
If $f_n \colon [-1,1] \to [-1,1]$, $n = 1,2,\ldots$, are piecewise-linear maps such that $f_n(0) = 0$ for each $n$, does there exist an embedding of $X = \varprojlim \left \langle [-1,1],f_n \right \rangle$ into $\mathbb{R}^2$ for which the point $\langle 0,0,\ldots \rangle \in X$ is accessible?
\end{question}

It is this form of the question which we attack in this paper.  Our main result is Theorem~\ref{thm:no twins}, which provides an affirmative answer to Question~\ref{ques:0 accessible} under some additional technical assumptions on the maps $f_n$.

We remark that if, in the above reduction, $\langle [-1,1],f_n \rangle$ is a simplicial system, then so is the system $\langle [-1,1],f_n' \rangle$ for appropriate choices of the homeomorphisms $h_n$.  So an affirmative answer to Question~\ref{ques:0 accessible} for simplicial systems would also imply an affirmative answer to the Nadler-Quinn problem for arc-like continua which are inverse limits of simplicial systems.

\section{Contour factorization}
\label{sec:contour}

In preparation for our study in the following sections of maps $f \colon [-1,1] \to [-1,1]$ with $f(0) = 0$, we begin in this section with the general notions of (one-sided) departures, contour points, and the contour factor of a map $f \colon [0,1] \to [-1,1]$.  In later sections, we will apply these notions to both ``halves'', $f {\restriction}_{[0,1]}$ and $f {\restriction}_{[-1,0]}$, of a map $f \colon [-1,1] \to [-1,1]$.  For the most part, our attention will be on piecewise linear maps, though we will only include this condition when needed.

\begin{defn}
\label{defn:dep}
Let $f \colon [0,1] \to [-1,1]$ be a map with $f(0) = 0$.  A \emph{departure} of $f$ is a number $x > 0$ such that $f(x) \notin f([0,x))$.  We say a departure $x$ of $f$ is \emph{positively oriented} (or a \emph{positive departure}) if $f(x) > 0$, and $x$ is \emph{negatively oriented} (or a \emph{negative departure}) if $f(x) < 0$.

A \emph{contour point} of $f$ is a departure $\alpha$ such that for any departure $x$ of $f$ with $x > \alpha$, there exists a departure $y$ of $f$, of orientation opposite to that of $\alpha$, such that $\alpha < y \leq x$.
\end{defn}

If $f$ is not constant, then it has departures, and therefore has at least one contour point.  Throughout the remainder of this paper, we will always assume our maps are not constant.

A map $f$ may have countably infinitely many contour points, in which case they may be enumerated in decreasing order: $\beta_1 > \beta_2 > \cdots$.  In this case, $\lim_{n \to \infty} f(\beta_n) = 0$, and if $x_0 = \lim_{n \to \infty} \beta_n$ then $f$ is constantly equal to $0$ on $[0,x_0]$.

If $f$ is piecewise-linear, then it has only finitely many contour points, which we may enumerate: $0 < \alpha_1 < \alpha_2 < \ldots < \alpha_n \leq 1$.  For convenience, we also denote $\alpha_0 = 0$.

\begin{defn}
\label{defn:contour factor}
Let $f \colon [0,1] \to [-1,1]$ be a piecewise-linear map with $f(0) = 0$.  The \emph{contour factor} of $f$ is the piecewise-linear map $t_f \colon [0,1] \to [-1,1]$ defined as follows:

Let $0 = \alpha_0 < \alpha_1 < \alpha_2 < \cdots < \alpha_n$ be the contour points of $f$.  Then $t_f \left( \frac{i}{n} \right) = f(\alpha_i)$ for each $i = 0,\ldots,n$, and $t_f$ is defined to be linear in between these points.

A \emph{meandering factor} of $f$ is any (piecewise-linear) map $s \colon [0,1] \to [0,1]$ such that $s(0) = 0$ and $f = t_f \circ s$.
\end{defn}

The next Proposition shows that there always exists at least one meandering factor of any given map $f$.  Note that $f$ may have more than one meandering factor.  Observe that if $t_f$ is the contour factor of $f$ and $s$ is a meandering factor for $f$, then $t_f([0,1]) = f([0,1])$, and $s$ is onto.

\begin{prop}
\label{prop:meandering factor}
Let $f \colon [0,1] \to [-1,1]$ be a piecewise-linear map with $f(0) = 0$, and let $t_f$ be the contour factor of $f$.  Then there exists a meandering factor of $f$.
\end{prop}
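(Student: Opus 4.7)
The plan is to construct the meandering factor $s$ explicitly, piecewise-linear, by pulling $f$ back through the individual linear segments of $t_f$. Let $0 = \alpha_0 < \alpha_1 < \cdots < \alpha_n$ be the contour points of $f$, and for each $i$ let $L_i$ denote the linear piece $t_f|_{[(i-1)/n,\, i/n]}$; this is a linear bijection from $[(i-1)/n, i/n]$ onto the closed interval $I_i$ with endpoints $f(\alpha_{i-1})$ and $f(\alpha_i)$. I will define
\[ s(x) = L_i^{-1}(f(x)) \quad \text{for } x \in [\alpha_{i-1}, \alpha_i], \ i = 1,\ldots,n, \]
and on the trailing interval $[\alpha_n, 1]$ (when $\alpha_n < 1$) I will set $s(x) = L_n^{-1}(f(x))$.

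For this definition to make sense, I must verify that $f([\alpha_{i-1}, \alpha_i]) \subseteq I_i$ for every $i$, and $f([\alpha_n, 1]) \subseteq I_n$. The engine for this is a structural fact about contour points: a departure is a contour point precisely when either it is the last departure or the next departure has the opposite orientation, so contour points are exactly the final departures in each maximal run of same-orientation departures. Consequently, consecutive contour points $\alpha_{i-1}, \alpha_i$ alternate in orientation, every departure lying in $(\alpha_{i-1}, \alpha_i]$ shares the orientation of $\alpha_i$, and --- applied to $\alpha_n$ --- there are no departures of $f$ at all beyond $\alpha_n$, since otherwise the last such departure would itself be a contour point past $\alpha_n$.

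From this I can read off the required range control. Suppose for instance that $\alpha_{i-1}$ is positively oriented and $\alpha_i$ negatively oriented (the reverse case is symmetric). The absence of any positive departure in $(\alpha_{i-1}, \alpha_i]$ prevents $f$ from attaining a new maximum there, so $f \leq f(\alpha_{i-1})$ on $[\alpha_{i-1}, \alpha_i]$. On the other hand, if $f$ dipped below $f(\alpha_i)$ at some point of $[0, \alpha_i)$, then the intermediate value theorem applied between $f(0) = 0$ and that lower value would force $f(\alpha_i) \in f([0, \alpha_i))$, contradicting that $\alpha_i$ is a departure; hence $f \geq f(\alpha_i)$ on $[0, \alpha_i]$. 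Together these give $f([\alpha_{i-1}, \alpha_i]) \subseteq I_i$. The same reasoning handles the initial interval $[0, \alpha_1]$, and on $[\alpha_n, 1]$ the absence of any departures keeps $f$ between the extremes already attained on $[0, \alpha_n]$, which coincide with the endpoints of $I_n$.

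Once the range control is in place, the remainder is routine verification: $s$ is continuous because at each breakpoint $\alpha_i$ (for $i < n$) both adjacent definitions yield $s(\alpha_i) = i/n$, and at $\alpha_n$ both yield $1$; $s$ is piecewise-linear because $f$ is piecewise-linear and each $L_i^{-1}$ is linear; $s(0) = L_1^{-1}(f(0)) = L_1^{-1}(0) = 0$; and $t_f(s(x)) = L_i(L_i^{-1}(f(x))) = f(x)$ on each piece, so $f = t_f \circ s$. The main obstacle is thus the structural claim about alternation of contour points and the consequent range inclusions $f([\alpha_{i-1}, \alpha_i]) \subseteq I_i$; everything else in the argument is forced.
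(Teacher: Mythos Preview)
Your construction is correct and is essentially the same as the paper's: both define $s(x) = L_i^{-1}(f(x))$ on $[\alpha_{i-1},\alpha_i]$ (the paper writes this via the weight $w$ and the convex combination $(1-w)\frac{i-1}{n} + w\frac{i}{n}$, which is exactly $L_i^{-1}(f(x))$). The only difference is that you supply a detailed argument for the range inclusion $f([\alpha_{i-1},\alpha_i]) \subseteq I_i$ via the alternation of contour-point orientations, whereas the paper simply asserts that $f(x)$ lies between $f(\alpha_{i-1})$ and $f(\alpha_i)$ without further comment.
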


\begin{proof}
Let $0 = \alpha_0 < \alpha_1 < \alpha_2 < \cdots < \alpha_n$ be the contour points of $f$.  Given $x \in [0,1]$, define $s(x)$ as follows.

If $x \in [\alpha_{i-1},\alpha_i)$ for some $i \geq 1$, then $f(x)$ is between $f(\alpha_{i-1})$ (inclusive) and $f(\alpha_i)$ (exclusive).  Let $w = w(x) = \frac{f(x) - f(\alpha_{i-1})}{f(\alpha_i) - f(\alpha_{i-1})} \in [0,1)$, so that $f(x) = (1-w) \cdot f(\alpha_{i-1}) + w \cdot f(\alpha_i)$.  Define $s(x) = (1-w) \cdot \frac{i-1}{n} + w \cdot \frac{i}{n}$.  Since $t_f$ is linear on $\left[ \frac{i-1}{n},\frac{i}{n} \right]$,
\begin{align*}
t_f \circ s(x) &= (1-w) \cdot t_f \left( \tfrac{i-1}{n} \right) + w \cdot t_f \left( \tfrac{i}{n} \right) \\
&= (1-w) \cdot f(\alpha_{i-1}) + w \cdot f(\alpha_i) \\
&= f(x) .
\end{align*}

If $x \geq \alpha_n$, then $f(x)$ is between $f(\alpha_{n-1})$ and $f(\alpha_n)$ (inclusive).  Let $w = \frac{f(x) - f(\alpha_{n-1})}{f(\alpha_n) - f(\alpha_{n-1})} \in [0,1]$, so that $f(x) = (1-w) \cdot f(\alpha_{n-1}) + w \cdot f(\alpha_n)$.  Define $s(x) = (1-w) \cdot \frac{n-1}{n} + w \cdot 1$.  Since $t_f$ is linear on $\left[ \frac{n-1}{n},1 \right]$,
\begin{align*}
t_f \circ s(x) &= (1-w) \cdot t_f \left( \tfrac{n-1}{n} \right) + w \cdot t_f(1) \\
&= (1-w) \cdot f(\alpha_{n-1}) + w \cdot f(\alpha_n) \\
&= f(x) .
\end{align*}

It is easy to see that $s$ is continuous and piecewise-linear (since $f$ is), and $s(0) = 0$.
\end{proof}

\begin{lem}
\label{lem:same contour}
Let $f,g \colon [0,1] \to [-1,1]$ be piecewise-linear maps with $f(0) = g(0) = 0$.  Then $t_f = t_g$ if and only if:
\begin{enumerate}
\item for each departure $x$ of $f$, there exists a departure $x'$ of $g$ such that $f([0,x)) = g([0,x'))$; and
\item for each departure $x'$ of $g$, there exists a departure $x$ of $f$ such that $f([0,x)) = g([0,x'))$.
\end{enumerate}
\end{lem}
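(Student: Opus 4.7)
The plan is to translate both directions of the biconditional into a single statement about image-interval collections, with the meandering factor from Proposition~\ref{prop:meandering factor} as the bridge. Fix meandering factors $s$ of $f$ and $s'$ of $g$, so $f = t_f \circ s$ and $g = t_g \circ s'$ with $s(0) = s'(0) = 0$ and both $s,s'$ onto (as noted just after Definition~\ref{defn:contour factor}). Let $\mathcal{J}_f := \{f([0,x)) : x \text{ is a departure of } f\}$ and define $\mathcal{J}_g$, $\mathcal{J}_{t_f}$, $\mathcal{J}_{t_g}$ analogously; conditions (1), (2) of the lemma together are precisely the equality $\mathcal{J}_f = \mathcal{J}_g$.

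The central step is to prove $\mathcal{J}_f = \mathcal{J}_{t_f}$ (and $\mathcal{J}_g = \mathcal{J}_{t_g}$ by the same argument). For $\mathcal{J}_f \subseteq \mathcal{J}_{t_f}$, take a departure $x$ of $f$. I claim $s(x)$ is a strict new maximum of $s$ on $[0,x]$: otherwise $s(y) = s(x)$ for some $y < x$, giving $f(y) = t_f(s(y)) = t_f(s(x)) = f(x)$ and contradicting the departure property. Combining this with the continuity fact $\sup s([0,x)) \geq \limsup_{y \to x^-} s(y) = s(x)$ forces $s([0,x))$ (a connected set with infimum $0$ attained and supremum $s(x)$ not attained) to equal $[0, s(x))$. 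Then $f([0,x)) = t_f([0, s(x)))$, and $t_f(s(x)) \notin t_f([0, s(x)))$ exhibits $s(x)$ as a departure of $t_f$. For the reverse inclusion $\mathcal{J}_{t_f} \subseteq \mathcal{J}_f$, given a departure $y$ of $t_f$, set $x_y := \min s^{-1}(y)$ (which exists by surjectivity and compactness); minimality and continuity give $s(x) < y$ for $x \in [0, x_y)$, whence $s([0,x_y)) = [0, y)$, so $f([0, x_y)) = t_f([0,y))$ and the same calculation shows $x_y$ is a departure of $f$.

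With this in hand, the forward direction is immediate: $t_f = t_g$ gives $\mathcal{J}_f = \mathcal{J}_{t_f} = \mathcal{J}_{t_g} = \mathcal{J}_g$, hence (1) and (2). For the reverse direction, (1) and (2) give $\mathcal{J}_{t_f} = \mathcal{J}_{t_g}$, and it remains to recover $t_f$ from $\mathcal{J}_{t_f}$. Writing $v_i := f(\alpha_i)$ with $v_0 = 0$, a short argument shows that consecutive contour values strictly alternate in sign with $|v_i|$ strictly increasing inside each sign class: if $\alpha_i, \alpha_{i+1}$ were consecutive same-sign contour points, then the largest opposite-oriented departure strictly between them would itself satisfy the contour-point criterion, a contradiction. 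Direct inspection of the piecewise linear $t_f$ then shows $\mathcal{J}_{t_f}$ consists exactly of the intervals $[v_{2i}, w)$ with $w \in (v_{2i-1}, v_{2i+1}]$ together with $(w, v_{2i-1}]$ with $w \in [v_{2i}, v_{2i-2})$ (using the convention $v_{-1} = 0$, and with the two families swapped if $v_1 < 0$). The sequence $(v_0, v_1, \ldots, v_n)$ is then read off inductively from $\mathcal{J}_{t_f}$: the sign of $v_1$ is determined by whether $\mathcal{J}_{t_f}$ contains an interval with left or right endpoint $0$; once $v_0, \ldots, v_{2i}$ are known, $v_{2i+1} = \sup\{w : [v_{2i}, w) \in \mathcal{J}_{t_f}\}$ and $v_{2i+2} = \inf\{w : (w, v_{2i+1}] \in \mathcal{J}_{t_f}\}$; the recursion terminates when no further such intervals exist. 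Matching image-interval collections therefore force matching sequences, and hence $t_f = t_g$. The main technical obstacle is the key step, specifically the continuity argument forcing $s(x)$ to be a strict new maximum of $s$ so that $s([0,x))$ takes the precise half-open form $[0, s(x))$; once that is in place, everything else is bookkeeping.
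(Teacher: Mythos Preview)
Your proof is correct and takes a genuinely different route from the paper's.

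The paper argues both directions directly at the level of departures and contour points. For the forward direction it locates each departure $x$ of $f$ in an interval $(\alpha_{i-1},\alpha_i]$ between consecutive contour points, reads off $f([0,x))$ as the interval between $f(\alpha_{i-1})$ and $f(x)$, and then constructs the matching departure of $g$ by hand in $(\beta_{i-1},\beta_i]$. For the converse it takes each contour point $\alpha_i$ of $f$, produces the corresponding departure $\beta_i$ of $g$ via condition~(1), and verifies the contour-point criterion for $\beta_i$ using~(1) and~(2) again. No meandering factor appears.

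Your argument instead factors through Proposition~\ref{prop:meandering factor}: the identity $\mathcal{J}_f = \mathcal{J}_{t_f}$ is the real content, and it reduces the lemma to the concrete claim that $\mathcal{J}_{t_f}$ determines $t_f$, which you then verify by an explicit structural description of $\mathcal{J}_{t_f}$ and a recursive reconstruction of the contour-value sequence $(v_0,\ldots,v_n)$. This is more modular --- the statement $\mathcal{J}_f = \mathcal{J}_{t_f}$ is a clean standalone fact --- and it makes the ``invariant'' encoded by $t_f$ completely explicit. The paper's approach is shorter and more self-contained (it does not invoke the existence of a meandering factor), but yours arguably explains better \emph{why} the equivalence holds.

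One small point of presentation: in the step ``otherwise $s(y) = s(x)$ for some $y < x$'', the literal negation of ``strict new maximum'' gives only $s(y) \geq s(x)$; you are silently using the intermediate value theorem (with $s(0)=0$) to pass to equality. This is immediate, but worth one word.
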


\begin{proof}
Let $0 = \alpha_0 < \alpha_1 < \alpha_2 < \cdots < \alpha_n$ be the contour points of $f$.

Suppose $t_f = t_g$.  Then $f$ and $g$ have the same number of contour points, which have the same images.  Let $0 = \beta_0 < \beta_1 < \beta_2 < \cdots < \beta_n$ be the contour points of $g$, so that $f(\alpha_i) = g(\beta_i)$ for each $i$.  Let $x$ be a departure of $f$.  Note that $0 < x \leq \alpha_n$, so there exists $i \geq 1$ such that $x \in (\alpha_{i-1},\alpha_i]$.  Note that since $\alpha_{i-1}$ is a contour point, there exists a departure $y \in (\alpha_{i-1},x]$ with orientation opposite that of $\alpha_{i-1}$.  If $x$ were to have the same orientation as $\alpha_{i-1}$, then it would follow that there is another contour point $\alpha \in [y,x)$.  However, there are no contour points strictly between $\alpha_{i-1}$ and $\alpha_i$.  Therefore, $x$ must have the same orientation as $\alpha_i$.  Also, $f([0,x)) = f([\alpha_{i-1},x))$ is the interval between $f(\alpha_{i-1})$ (inclusive) and $f(x)$ (exclusive).  Similarly, for any departure $x' \in (\beta_{i-1},\beta_i]$ of $g$, $x'$ has the same orientation as $\beta_i$, and $g([0,x')) = g([\beta_{i-1},x'))$ is the interval between $g(\beta_{i-1}) = f(\alpha_{i-1})$ (inclusive) and $g(x')$ (exclusive).  In the same way, considering the departures $\alpha_{i-1}$ of $f$ and $\beta_{i-1}$ of $g$, we deduce that $g([0,\beta_{i-1}]) = f([0,\alpha_{i-1}])$.

Since $f(x)$ is between $f(\alpha_{i-1}) = g(\beta_{i-1})$ (exclusive) and $f(\alpha_i) = g(\beta_i)$ (inclusive), there exists $x' \in (\beta_{i-1},\beta_i]$ such that $g(x') = f(x)$.  Choose $x'$ minimal with respect to this property, so that $g(x') \notin g((\beta_{i-1},x'))$.  Also,
\[ g(x') = f(x) \notin f([0,x)) \supset f([0,\alpha_{i-1}]) = g([0,\beta_{i-1}]) .\]
Therefore $x'$ is a departure of $g$, and $f([0,x)) = g([0,x'))$ as these are both the interval between $f(\alpha_{i-1}) = g(\beta_{i-1})$ (inclusive) and $f(x) = g(x')$ (exclusive).  This proves (1); (2) follows similarly.

Conversely, suppose (1) and (2) hold.  Fix any $i \geq 1$.  By assumption, there exists a departure $\beta_i$ of $g$ such that $g([0,\beta_i)) = f([0,\alpha_i))$, which in particular implies that $\beta_i$ has the same orientation for $g$ as $\alpha_i$ has for $f$.  Let $x' > \beta_i$ be any departure of $g$.  By (2), there exists a departure $x$ of $f$ such that $f([0,x)) = g([0,x'))$, and clearly $x > \alpha_i$.  Since $\alpha_i$ is a contour point of $f$, there exists a departure $y \in (\alpha_i,x]$ of $f$ of orientation opposite to that of $\alpha_i$.  Then by (1), there exists a departure $y'$ of $g$ such that $f([0,y)) = g([0,y'))$, which implies $y' \in (\beta_i,x']$ and $y'$ has opposite orientation to that of $\beta_i$.  Therefore $\beta_i$ is a contour point of $g$.  Thus for each contour point $\alpha_i$ of $f$ there exists a corresponding contour point $\beta_i$ of $g$ with $f(\alpha_i) = g(\beta_i)$, and clearly $\beta_{i-1} < \beta_i$ for each $i$.  Likewise, for each contour point of $g$ there is a corresponding contour point of $f$ with the same image, and these appear in the same order in $f$ as they do in $g$.  Thus $t_f = t_g$.
\end{proof}

We give two applications of Lemma~\ref{lem:same contour} below.  The first, Proposition~\ref{prop:contour minimal}, shows that the contour factor is the simplest map $\tau \colon [0,1] \to [0,1]$ for which there exists $\sigma \colon [0,1] \to [0,1]$ with $\sigma(0) = 0$ and $f = \tau \circ \sigma$.  This is a characteristic property of the contour factor, however we will not use this result in the remainder of this paper.  The second, Lemma~\ref{lem:same comp contour}, shows that the contour factor of a composition $g \circ f$ depends only on $g$ and the contour factor of $f$.

\begin{prop}
\label{prop:contour minimal}
Let $f \colon [0,1] \to [-1,1]$ be a piecewise-linear map with $f(0) = 0$.  Suppose $\tau \colon [0,1] \to [-1,1]$ and $\sigma \colon [0,1] \to [0,1]$ are piecewise-linear maps such that $\tau(0) = \sigma(0) = 0$, $\sigma$ is onto, and $f = \tau \circ \sigma$.  Then $t_\tau = t_f$.  Consequently, there exists $s_0 \colon [0,1] \to [0,1]$ such that $f = t_f \circ s_0 \circ \sigma$ (that is, $s_0 \circ \sigma$ is a meandering factor of $f$).
\end{prop}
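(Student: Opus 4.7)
The plan is to apply Lemma~\ref{lem:same contour} to the pair $(f, \tau)$, verifying its two conditions. Once $t_f = t_\tau$ is established, the ``consequently'' clause is immediate: by Proposition~\ref{prop:meandering factor} applied to $\tau$, there is a meandering factor $s_0$ of $\tau$ with $\tau = t_\tau \circ s_0 = t_f \circ s_0$, and then $f = \tau \circ \sigma = t_f \circ (s_0 \circ \sigma)$.

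For condition (1), given a departure $x$ of $f$, I take $x' := \sigma(x)$ as the candidate departure of $\tau$. Three quick observations are needed: (a) $x' > 0$, since $\sigma(x) = 0$ would force $f(x) = \tau(0) = 0 = f(0) \in f([0,x))$; (b) $x' \notin \sigma([0,x))$, since any $y \in [0,x)$ with $\sigma(y) = x'$ would give $f(y) = \tau(x') = f(x)$, contradicting departure; (c) $\sigma$ cannot overshoot $x'$ on $[0,x)$, because $\sigma(y) > x'$ for some $y < x$ together with $\sigma(0) = 0$ would force $x' \in \sigma([0,y])$ by the intermediate value theorem, contradicting (b). Combined with the fact that $\sigma([0,x))$ is connected, contains $0$, and has $x'$ as a limit point (by continuity of $\sigma$ at $x$), these observations pin down $\sigma([0,x)) = [0,x')$. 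Then $\tau([0,x')) = \tau(\sigma([0,x))) = f([0,x))$, and $\tau(x') = f(x) \notin \tau([0,x'))$ shows that $x'$ is a departure of $\tau$ with the required image property.

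For condition (2), given a departure $x'$ of $\tau$, surjectivity of $\sigma$ lets me define $x := \min \sigma^{-1}(x')$. Then $x > 0$ since $\sigma(0) = 0 < x'$, and $x' \notin \sigma([0,x))$ by minimality; the same connectedness and IVT arguments as before yield $\sigma([0,x)) = [0,x')$ and hence $f([0,x)) = \tau([0,x'))$. Then $\tau(x') \notin \tau([0,x'))$ gives $f(x) \notin f([0,x))$, so $x$ is a departure of $f$ as required. The only nontrivial point in the whole argument is the identity $\sigma([0,x)) = [0,x')$: because $\sigma$ need not be monotone, one must rule out wiggling above or below $x'$, which is exactly what observations (b) and (c) accomplish in tandem with connectedness. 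Everything else is routine verification.
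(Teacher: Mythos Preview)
Your proof is correct and follows essentially the same approach as the paper's: both apply Lemma~\ref{lem:same contour} to the pair $(f,\tau)$, taking $x' = \sigma(x)$ for condition~(1) and $x = \min \sigma^{-1}(x')$ for condition~(2), and deducing $\sigma([0,x)) = [0,x')$ in each case. Your version simply fills in more of the justification for that last identity (your observations (a)--(c) and the connectedness/IVT remarks), where the paper is content to say ``This means $\sigma([0,x)) = [0,x')$'' and move on.
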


\begin{proof}
To prove $t_\tau = t_f$, we use Lemma~\ref{lem:same contour}.

Let $x$ be any departure of $f$, and let $x' = \sigma(x)$, so that $\tau(x') = f(x)$.  Then $x' \notin \sigma([0,x))$ since otherwise $f(x) = \tau(x')$ would be in $f([0,x)) = \tau(\sigma([0,x))$, contradicting the assumption that $x$ is a departure of $f$.  This means $\sigma([0,x)) = [0,x')$.  Now $\tau([0,x')) = \tau(\sigma([0,x))) = f([0,x))$, and $x'$ is a departure of $\tau$ since $\tau(x') = f(x) \notin f([0,x)) = \tau([0,x'))$.

Conversely, let $x'$ be any departure of $\tau$.  Since $\sigma$ is onto, there exists $x$ such that $\sigma(x) = x'$.  Choose $x$ minimal with respect to this property, so that $\sigma([0,x)) = [0,\sigma(x))$.  Then $f([0,x)) = \tau(\sigma([0,x))) = \tau([0,\sigma(x))) = \tau([0,x'))$, and $x$ is a departure of $f$ since $f(x) = \tau(x') \notin \tau([0,x')) = f([0,x))$.

Therefore, by Lemma~\ref{lem:same contour}, $t_\tau = t_f$.
\end{proof}

\begin{lem}
\label{lem:same comp contour}
Let $f_1,f_2 \colon [0,1] \to [-1,1]$ and $g \colon [-1,1] \to [-1,1]$ be piecewise-linear maps with $f_1(0) = f_2(0) = g(0) = 0$.  If $t_{f_1} = t_{f_2}$, then $t_{g \circ f_1} = t_{g \circ f_2}$.
\end{lem}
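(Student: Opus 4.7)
The plan is to verify the criterion of Lemma~\ref{lem:same contour} for the pair $g \circ f_1$, $g \circ f_2$. That is, I will show that every departure of $g \circ f_1$ can be matched, in the sense of item (1), with a departure of $g \circ f_2$ having the same initial image, and then appeal to symmetry for item (2).

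The key preliminary observation is that every departure $x$ of $g \circ f_1$ is automatically a departure of $f_1$ itself: if $f_1(x) = f_1(t)$ for some $t \in [0,x)$, then $g(f_1(x)) = g(f_1(t))$, placing $(g \circ f_1)(x)$ inside $(g \circ f_1)([0,x))$, a contradiction. Thus, given a departure $x$ of $g \circ f_1$, the hypothesis $t_{f_1} = t_{f_2}$ together with Lemma~\ref{lem:same contour} produces a departure $x'$ of $f_2$ with $f_1([0,x)) = f_2([0,x'))$.

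The step that requires the most care is showing that this $x'$ actually works, i.e.\ that $x'$ is a departure of $g \circ f_2$ and that the images of the initial half-open intervals under $g \circ f_1$ and $g \circ f_2$ agree. Composing with $g$ immediately gives $(g \circ f_1)([0,x)) = (g \circ f_2)([0,x'))$. To conclude that $x'$ is a departure of $g \circ f_2$, it suffices to show $f_1(x) = f_2(x')$, since then $(g \circ f_2)(x') = g(f_1(x)) = (g \circ f_1)(x) \notin (g \circ f_1)([0,x)) = (g \circ f_2)([0,x'))$. I would prove this equality by taking closures: since $f_1$ is continuous, $\overline{f_1([0,x))} = f_1([0,x]) = f_1([0,x)) \cup \{f_1(x)\}$, and the departure condition for $f_1$ at $x$ forces $f_1(x) \notin f_1([0,x))$, so $f_1(x)$ is the unique point in $\overline{f_1([0,x))} \setminus f_1([0,x))$. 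The same reasoning applies to $f_2$ at $x'$, and since $f_1([0,x)) = f_2([0,x'))$, the unique missing boundary points must coincide.

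This verifies condition (1) of Lemma~\ref{lem:same contour} for $g \circ f_1, g \circ f_2$; condition (2) follows by the identical argument with the roles of $f_1$ and $f_2$ swapped. Hence $t_{g \circ f_1} = t_{g \circ f_2}$. The only real obstacle is the closure argument identifying $f_1(x)$ with $f_2(x')$, which is what allows the matching of departures to pass through composition with $g$.
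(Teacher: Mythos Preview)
Your proof is correct and follows essentially the same route as the paper's: both invoke Lemma~\ref{lem:same contour}, observe that a departure of $g\circ f_1$ is already a departure of $f_1$, pull across the matching departure $x'$ of $f_2$, and then verify that $x'$ is a departure of $g\circ f_2$. The one difference is that you make explicit the equality $f_1(x)=f_2(x')$ via the closure argument (which is clean: $\overline{f_1([0,x))}=f_1([0,x))\cup\{f_1(x)\}$, and the departure condition singles out $f_1(x)$ uniquely), whereas the paper's proof uses $g(f_2(x'))=g(f_1(x))$ in its contradiction chain without isolating this step.
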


\begin{proof}
Suppose $t_{f_1} = t_{f_2}$.  To prove $t_{g \circ f_1} = t_{g \circ f_2}$, we use Lemma~\ref{lem:same contour}.

Let $x$ be a departure of $g \circ f_1$.  Then $x$ is also a departure of $f_1$, therefore by Lemma~\ref{lem:same contour}, there exists a departure $x'$ of $f_2$ such that $f_1([0,x)) = f_2([0,x'))$, and so $g \circ f_1([0,x)) = g \circ f_2([0,x'))$.  Suppose $y' \in [0,x')$ is such that $g \circ f_2(y') = g \circ f_2(x')$.  Then $f_2(y') \in f_2([0,x')) = f_1([0,x))$, so there exists $y \in [0,x)$ such that $f_1(y) = f_2(y')$.  But then $g \circ f_1(y) = g \circ f_2(y') = g \circ f_2(x') = g \circ f_1(x)$, contradicting the assumption that $x$ is a departure of $g \circ f_1$.  Thus $x'$ is a departure of $g \circ f_2$.

Similarly, for each departure $x'$ of $g \circ f_2$ there exists a departure $x$ of $g \circ f_1$ such that $g \circ f_1([0,x)) = g \circ f_2([0,x'))$.  Therefore, by Lemma~\ref{lem:same contour}, $t_{g \circ f_1} = t_{g \circ f_2}$.
\end{proof}

\section{Radial departures}
\label{sec:rad deps}

In the remainder of this paper we consider maps $f \colon [-1,1] \to [-1,1]$ satisfying $f(0) = 0$, and for which $f {\restriction}_{[-1,0]}$ and $f {\restriction}_{[0,1]}$ are both non-constant (we will tacitly assume this latter condition throughout the remainder of this paper).  We adapt the above concepts of departures and contour points to both the left and right ``halves'' of such a function $f$, as follows.

Define $\mathsf{r} \colon [0,1] \to [-1,0]$ by $\mathsf{r}(x) = -x$.

\begin{defn}
Let $f \colon [-1,1] \to [-1,1]$ be a map with $f(0) = 0$.
\begin{itemize}
\item A \emph{right departure} of $f$ is a number $x > 0$ such that $x$ is a departure (in the sense of Definition~\ref{defn:dep}) of $f {\restriction}_{[0,1]}$.
\item A \emph{left departure} of $f$ is a number $x < 0$ such that $-x$ is a departure (in the sense of Definition~\ref{defn:dep}) of $f {\restriction}_{[-1,0]} \circ \mathsf{r}$; i.e.\ a number $x < 0$ such that $f(x) \notin f((x,0])$.
\end{itemize}
If $x$ is either a right departure or a left departure of $f$, then we say $x$ is \emph{positively oriented} if $f(x) > 0$, and $x$ is \emph{negatively oriented} if $f(x) < 0$.
\begin{itemize}
\item A \emph{right contour point} of $f$ is a number $\alpha > 0$ such that $\alpha$ is a contour point (in the sense of Definition~\ref{defn:dep}) of $f {\restriction}_{[0,1]}$.
\item A \emph{left contour point} of $f$ is a number $\beta < 0$ such that $-\beta$ is a contour point (in the sense of Definition~\ref{defn:dep}) of $f {\restriction}_{[-1,0]} \circ \mathsf{r}$.
\end{itemize}
\end{defn}

\begin{defn}
Let $f \colon [-1,1] \to [-1,1]$ be a map with $f(0) = 0$.  A \emph{radial departure} of $f$ is a pair $\langle x_1,x_2 \rangle$ such that $-1 \leq x_1 < 0 < x_2 \leq 1$ and either:
\begin{enumerate}[label=(\arabic{*})]
\item \label{pos dep} $f((x_1,x_2)) = (f(x_1),f(x_2))$; or
\item \label{neg dep} $f((x_1,x_2)) = (f(x_2),f(x_1))$.
\end{enumerate}
We say a radial departure $\langle x_1,x_2 \rangle$ of $f$ is \emph{positively oriented} (or a \emph{positive radial departure}) if \ref{pos dep} holds, and $\langle x_1,x_2 \rangle$ is \emph{negatively oriented} (or a \emph{negative radial departure}) if \ref{neg dep} holds.
\end{defn}

Observe that if $\langle x_1,x_2 \rangle$ is a positive (respectively, negative) radial departure of $f$, then $x_1$ is a negative (respectively, positive) left departure of $f$ and $x_2$ is a positive (respectively, negative) right departure of $f$.  However, the converse is not true: if $x_1$ is a left departure of $f$ and $x_2$ is a right departure of $f$, and these two departures have opposite orientations, it is not necessarily the case that $\langle x_1,x_2 \rangle$ is a radial departure of $f$, because it may happen that $f(x_2) \in f((x_1,0])$ or $f(x_1) \in f([0,x_2))$.

We develop several basic properties of radial departures in the remainder of this section.

\begin{lem}
\label{lem:dep values unique}
Let $f \colon [-1,1] \to [-1,1]$ be a map with $f(0) = 0$.  Suppose $\langle x_1,x_2 \rangle$ and $\langle x_1',x_2' \rangle$ are radial departures of $f$.
\begin{enumerate}
\item If $\{f(x_1),f(x_2)\} = \{f(x_1'),f(x_2')\}$ then $x_1 = x_1'$ and $x_2 = x_2'$.
\item If $\langle x_1,x_2 \rangle$ and $\langle x_1',x_2' \rangle$ have opposite orientations, then $\{f(x_1),f(x_2)\} \cap \{f(x_1'),f(x_2')\} = \emptyset$.
\end{enumerate}
\end{lem}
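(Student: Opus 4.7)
I would handle both parts with the same underlying observation: in any radial departure $\langle x_1,x_2 \rangle$, the value $f(x_i)$ at an endpoint is \emph{not} attained on the open interval $(x_1,x_2)$ (since $f((x_1,x_2))$ equals the \emph{open} interval with endpoints $f(x_1),f(x_2)$). Combined with the fact that $0 \in (x_1,x_2)$ and $f(0) = 0$, this forces $f(x_1)$ and $f(x_2)$ to have opposite signs, with $\mathrm{sign}(f(x_2))$ matching the orientation of the departure. These two facts — the ``endpoint-avoidance'' property and the sign determination — are what drive everything.

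I would prove (2) first. Suppose $\langle x_1,x_2\rangle$ is positive and $\langle x_1',x_2'\rangle$ is negative, and suppose for contradiction that they share a value. By the sign observation we have $f(x_1), f(x_2') < 0 < f(x_2), f(x_1')$, so the only candidate equalities are $f(x_1) = f(x_2')$ or $f(x_2) = f(x_1')$. Consider the first case. Endpoint-avoidance applied to the positive departure says $x_2'$, which realizes the value $f(x_1)$, cannot lie in $(x_1,x_2)$; since $x_2' > 0$, this forces $x_2 \le x_2'$. Symmetrically, $x_1 \notin (x_1',x_2')$, giving $x_1 \le x_1'$. Equality in either inequality would equate a negative value of $f$ with a positive one, hence both inequalities are strict: $x_1 < x_1'$ and $x_2 < x_2'$. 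But then $x_1' \in (x_1,x_2)$ forces $f(x_1') \in (f(x_1),f(x_2))$, so $f(x_1') < f(x_2)$; and $x_2 \in (x_1',x_2')$ forces $f(x_2) \in (f(x_2'),f(x_1'))$, so $f(x_2) < f(x_1')$ — contradiction. The second case ($f(x_2) = f(x_1')$) is handled by the mirror-image argument, yielding $(x_1',x_2') \subsetneq (x_1,x_2)$ and then the analogous numerical contradiction from $f(x_1') \in (f(x_1),f(x_2))$.

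For (1), I would reduce to (2). If $\{f(x_1),f(x_2)\} = \{f(x_1'),f(x_2')\}$, then by (2) the two radial departures must have the \emph{same} orientation, and then matching by sign gives $f(x_1) = f(x_1')$ and $f(x_2) = f(x_2')$. Assume both are positive (the negative case is symmetric). Endpoint-avoidance for $\langle x_1,x_2\rangle$ applied to the value $f(x_2) = f(x_2')$ forces $x_2' \notin (x_1,x_2)$, so $x_2 \le x_2'$; the symmetric argument applied to $\langle x_1',x_2'\rangle$ gives $x_2' \le x_2$, so $x_2 = x_2'$. The same reasoning with $f(x_1) = f(x_1')$ yields $x_1 = x_1'$.

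The main obstacle is simply keeping straight which of the four points lies in which of the two open intervals in case (2); once the inequalities $x_1 \le x_1'$, $x_2 \le x_2'$ (in case~(iii)) or the reverse (in case~(iv)) are established and shown to be strict via the sign constraint, the numerical contradiction is immediate. No new technology beyond the definition of radial departure and continuity is needed.
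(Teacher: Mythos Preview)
Your argument is correct and uses essentially the same mechanism as the paper --- the ``endpoint-avoidance'' observation together with the sign constraint, followed by a pair of contradictory inequalities --- so there is no substantive gap. Two small remarks: first, your summary of the second subcase of (2) is misstated. When $f(x_2)=f(x_1')$, the mirror argument gives $x_1' < x_1$ and $x_2' < x_2$, which is \emph{not} $(x_1',x_2') \subsetneq (x_1,x_2)$; the contradiction then comes from $x_1 \in (x_1',x_2')$ and $x_2' \in (x_1,x_2)$, hence $f(x_1) > f(x_2')$ and $f(x_2') > f(x_1)$, rather than from $f(x_1')$ as you wrote. Second, the paper proves (1) directly without invoking (2): from $\{f(x_1),f(x_2)\}=\{f(x_1'),f(x_2')\}$ one immediately gets $x_1',x_2' \notin (x_1,x_2)$ and symmetrically, giving the equalities in one step; your reduction to (2) is perfectly valid but slightly longer.
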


\begin{proof}
For (1), suppose $\{f(x_1),f(x_2)\} = \{f(x_1'),f(x_2')\}$.  Since $\langle x_1,x_2 \rangle$ is a radial departure, it follows that $x_1',x_2' \notin (x_1,x_2)$, so $x_1' \leq x_1$ and $x_2' \geq x_2$.  Similarly, $x_1 \leq x_1'$ and $x_2 \geq x_2'$.

For (2), suppose $\langle x_1,x_2 \rangle$ is a positive radial departure and $\langle x_1',x_2' \rangle$ is a negative radial departure.  This means that $f(x_1) < 0 < f(x_1')$ and $f(x_2') < 0 < f(x_2)$.  Suppose for a contradiction that $f(x_1') = f(x_2)$.  Then $x_1' < x_1$ since $f(x_2) \notin f((x_1,x_2))$ and $x_2' < x_2$ since $f(x_1') \notin f((x_1',x_2'))$.  But now $x_1 \in (x_1',x_2')$ and $x_2' \in (x_1,x_2)$, so $f(x_1) \in f((x_1',x_2')) = (f(x_2'),f(x_1'))$ and $f(x_2') \in f((x_1,x_2)) = (f(x_1),f(x_2))$, implying $f(x_1) > f(x_2')$ and $f(x_2') > f(x_1)$, a contradiction.  Therefore $f(x_1') \neq f(x_2)$.  Similarly, $f(x_2') \neq f(x_1)$.
\end{proof}

\begin{prop}
\label{prop:meet join}
Let $f \colon [-1,1] \to [-1,1]$ be a map with $f(0) = 0$.  Suppose $\langle x_1,x_2 \rangle$ and $\langle x_1',x_2' \rangle$ are radial departures of $f$ with the same orientation.  Then both
\[ \langle \min\{x_1,x_1'\}, \max\{x_2,x_2'\} \rangle \quad \textrm{and} \quad \langle \max\{x_1,x_1'\}, \min\{x_2,x_2'\} \rangle \]
are radial departures of $f$ of the same orientation as $\langle x_1,x_2 \rangle$ (and $\langle x_1',x_2' \rangle$).
\end{prop}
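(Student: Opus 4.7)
The plan is to reduce to a single generic configuration by symmetry, and then verify each required set-equality $f((a,b)) = (f(a),f(b))$ directly: one inclusion using the image conditions of the two given radial departures, the other using the intermediate value theorem. By the up-down symmetry $y \mapsto -y$ in the codomain, it suffices to treat positively oriented radial departures (so $f(x_1), f(x_1') < 0 < f(x_2), f(x_2')$). By swapping the two departures if necessary, I may assume $x_1 \leq x_1'$. If additionally $x_2 \geq x_2'$, the meet $\langle \max(x_1,x_1'), \min(x_2,x_2')\rangle$ equals $\langle x_1', x_2'\rangle$ and the join equals $\langle x_1, x_2\rangle$, so the conclusion is immediate. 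Thus the essential case is $x_1 \leq x_1' < 0 < x_2 \leq x_2'$.

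In this configuration, $x_1' \in [x_1, x_2)$, so $f(x_1') \in \{f(x_1)\} \cup f((x_1, x_2)) = [f(x_1), f(x_2))$; combined with $f(x_1') < 0 < f(x_2)$, this yields $f(x_1) \leq f(x_1')$. Symmetrically, $x_2 \in (x_1', x_2']$ forces $f(x_2) \leq f(x_2')$. To verify the join $\langle x_1, x_2'\rangle$, for the inclusion $f((x_1, x_2')) \subseteq (f(x_1), f(x_2'))$ I decompose $(x_1, x_2') = (x_1, x_2) \cup [x_2, x_2')$, where the second piece is contained in $(x_1', x_2')$ since $x_1' < 0 < x_2 \leq x_2'$; each piece then maps into $(f(x_1), f(x_2'))$ by the image conditions of the two radial departures together with the inequalities $f(x_1) \leq f(x_1')$ and $f(x_2) \leq f(x_2')$. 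For the reverse inclusion, IVT applied to $f$ on $[x_1, x_2']$ shows every $v \in (f(x_1), f(x_2'))$ has a preimage in $[x_1, x_2']$, and since $v \neq f(x_1), f(x_2')$ that preimage lies in the open interval. For the meet $\langle x_1', x_2\rangle$, the containment $(x_1', x_2) \subseteq (x_1, x_2) \cap (x_1', x_2')$ gives $f((x_1', x_2)) \subseteq (f(x_1), f(x_2)) \cap (f(x_1'), f(x_2')) = (f(x_1'), f(x_2))$, where the final equality uses precisely the two key inequalities above; IVT on $[x_1', x_2]$ supplies the reverse inclusion.

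Orientation is automatic: in both new pairs the left coordinate has strictly negative $f$-value and the right coordinate strictly positive, so both are positive radial departures. I expect the only subtle point to be the sign/inequality bookkeeping used to identify the intersection of the two image intervals as exactly $(f(x_1'), f(x_2))$; without the inequalities $f(x_1) \leq f(x_1')$ and $f(x_2) \leq f(x_2')$, that intersection could be a different interval and the meet argument would break. The negative-orientation case follows by the symmetric argument (equivalently, by post-composing $f$ with $y \mapsto -y$).
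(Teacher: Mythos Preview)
Your proof is correct and follows essentially the same approach as the paper's: both use the containment of the meet/join intervals in (one or both of) the original intervals together with the ordering of the endpoint values to establish the required image equalities. The paper's version is slightly more streamlined in that it handles all configurations uniformly via the observations $f(\min\{x_1,x_1'\}) = \min\{f(x_1),f(x_1')\}$, etc., without splitting into nested versus crossing cases, and it leaves the IVT direction implicit; but the substance is the same.
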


\begin{proof}
Suppose $\langle x_1,x_2 \rangle$ and $\langle x_1',x_2' \rangle$ are both positive radial departures (the case where they are both negative is similar).  Let $y_1 = \min\{x_1,x_1'\}$, $y_2 = \max\{x_2,x_2'\}$, and let $z_1 = \max\{x_1,x_1'\}$, $z_2 = \min\{x_2,x_2'\}$.  Observe that $f(y_1) = \min\{f(x_1),f(x_1')\}$, $f(y_2) = \max\{f(x_2),f(x_2')\}$, $f(z_1) = \max\{f(x_1),f(x_1')\}$, and $f(z_2) = \min\{f(x_2),f(x_2')\}$.

Given $x \in (y_1,y_2)$, we have $x > x_1$ or $x > x_1'$, hence $f(x) > f(x_1)$ or $f(x) > f(x_1')$, so $f(x) > f(y_1)$.  Likewise, $f(x) < f(y_2)$.  Thus $f((y_1,y_2)) = (f(y_1),f(y_2))$.

Similarly, given $x \in (z_1,z_2)$, we have $x > x_1$ and $x > x_1'$, hence $f(x) > f(x_1)$ and $f(x) > f(x_1')$, so $f(x) > f(z_1)$.  Likewise, $f(x) < f(z_2)$.  Thus $f((z_1,z_2)) = (f(z_1),f(z_2))$.
\end{proof}

\begin{prop}
\label{prop:alt dep nested}
Let $f \colon [-1,1] \to [-1,1]$ be a map with $f(0) = 0$.  Suppose $\langle x_1,x_2 \rangle$ and $\langle x_1',x_2' \rangle$ are radial departures of $f$ with opposite orientations.  Then either
\[ x_1 < x_1' < 0 < x_2' < x_2 \quad \textrm{or} \quad x_1' < x_1 < 0 < x_2 < x_2' .\]
\end{prop}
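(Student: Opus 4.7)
The plan is to do a case analysis on the relative positions of the endpoints, using Lemma~\ref{lem:dep values unique}(2) to eliminate equality cases and the definition of radial departure to rule out the ``interleaved'' configurations.

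First, I would assume without loss of generality that $\langle x_1,x_2 \rangle$ is a positive radial departure and $\langle x_1',x_2' \rangle$ is a negative one, so that $f(x_1) < 0 < f(x_2)$, $f(x_2') < 0 < f(x_1')$, $f((x_1,x_2)) = (f(x_1),f(x_2))$, and $f((x_1',x_2')) = (f(x_2'),f(x_1'))$. By Lemma~\ref{lem:dep values unique}(2), the four values $f(x_1),f(x_2),f(x_1'),f(x_2')$ are pairwise distinct, so in particular $x_1 \neq x_1'$ and $x_2 \neq x_2'$. Since both intervals $(x_1,x_2)$ and $(x_1',x_2')$ contain $0$, we fall into one of four mutually exclusive cases depending on the signs of $x_1-x_1'$ and $x_2-x_2'$; the two ``nested'' cases are exactly those in the conclusion of the proposition, so the job is to rule out the remaining two ``interleaved'' cases.

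Suppose first that $x_1 < x_1' < 0 < x_2 < x_2'$. Then $x_1' \in (x_1,x_2)$, so $f(x_1') \in (f(x_1),f(x_2))$, forcing $f(x_1') < f(x_2)$. On the other hand $x_2 \in (x_1',x_2')$, so $f(x_2) \in (f(x_2'),f(x_1'))$, forcing $f(x_2) < f(x_1')$. This contradiction eliminates this case. The other interleaved case $x_1' < x_1 < 0 < x_2' < x_2$ is handled symmetrically: $x_1 \in (x_1',x_2')$ gives $f(x_1) > f(x_2')$, while $x_2' \in (x_1,x_2)$ gives $f(x_2') > f(x_1)$, again a contradiction.

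Since equality between the endpoints is already excluded and the two interleaved cases are impossible, one of the two nested configurations in the statement must hold. I do not anticipate any genuine obstacle; this is essentially a short book-keeping argument, and the main thing to get right is the bookkeeping of inequalities arising from applying the radial-departure property to a single point lying inside the other radial departure's open interval.
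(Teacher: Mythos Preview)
Your argument is correct and essentially identical to the paper's: both assume without loss of generality that $\langle x_1,x_2\rangle$ is positive and $\langle x_1',x_2'\rangle$ is negative, and both derive a contradiction from an interleaved configuration by placing one endpoint inside the other open interval and comparing the resulting value inequalities. The only cosmetic difference is that you invoke Lemma~\ref{lem:dep values unique}(2) to rule out $x_1=x_1'$ and $x_2=x_2'$, whereas the paper leaves this implicit (it follows immediately from the sign conditions $f(x_1)<0<f(x_1')$ and $f(x_2')<0<f(x_2)$).
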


\begin{proof}
Suppose $\langle x_1,x_2 \rangle$ is a positive radial departure and $\langle x_1',x_2' \rangle$ is a negative radial departure.  If $x_1 < x_1'$, then $x_1' \in (x_1,x_2)$, so $f(x_1') \in (f(x_1),f(x_2))$, so that $f(x_1') < f(x_2)$.  Now we must have $x_2' < x_2$, since otherwise we would have $x_2 \in (x_1',x_2')$ and then $f(x_2) \in f((x_1',x_2')) = (f(x_2'),f(x_1'))$, meaning $f(x_2) < f(x_1')$, a contradiction.  Similarly, if $x_2' < x_2$ then $x_1 < x_1'$.
\end{proof}

\begin{prop}
\label{prop:comp dep}
Let $f,g \colon [-1,1] \to [-1,1]$ be maps with $f(0) = g(0) = 0$, and let $x_1,x_2$ be such that $-1 \leq x_1 < 0 < x_2 \leq 1$.  Then $\langle x_1,x_2 \rangle$ is a positive (respectively, negative) radial departure of $f \circ g$ if and only if either:
\begin{enumerate}
\item $\langle x_1,x_2 \rangle$ is a positive radial departure of $g$ and $\langle g(x_1),g(x_2) \rangle$ is a positive (respectively, negative) radial departure of $f$; or
\item $\langle x_1,x_2 \rangle$ is a negative radial departure of $g$ and $\langle g(x_2),g(x_1) \rangle$ is a negative (respectively, positive) radial departure of $f$.
\end{enumerate}
\end{prop}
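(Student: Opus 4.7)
The \emph{if} direction is a straightforward composition of image identities. Assuming condition~(1), combining $g((x_1,x_2))=(g(x_1),g(x_2))$ with $f((g(x_1),g(x_2)))=(f(g(x_1)),f(g(x_2)))$ yields $(f\circ g)((x_1,x_2))=((f\circ g)(x_1),(f\circ g)(x_2))$, a positive radial departure of $f\circ g$. Condition~(2) produces a ``double reversal'': $g$ swaps the endpoints of the image interval and then $f$ swaps them back, again giving $(f\circ g)((x_1,x_2))=((f\circ g)(x_1),(f\circ g)(x_2))$. The negative-target cases, in which exactly one of the two compositions contributes a single reversal, are handled identically.

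For the \emph{only if} direction, set $h=f\circ g$ and assume, say, that $\langle x_1,x_2\rangle$ is a positive radial departure of $h$, so $h((x_1,x_2))=(h(x_1),h(x_2))$ with $h(x_1)<0<h(x_2)$. My first small step is the observation that $g(x)\notin\{g(x_1),g(x_2)\}$ for every $x\in(x_1,x_2)$: otherwise $h(x)$ would equal $h(x_i)$, contradicting the fact that the endpoints of the open interval $(h(x_1),h(x_2))$ are not in it. The same reasoning gives $g(x_1)\neq g(x_2)$, since $h(x_1)\neq h(x_2)$.

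The central step is then the topological identification of $J:=g((x_1,x_2))$. It is an interval (continuous image of a connected set), it contains $g(0)=0$, and approaching $x_1$ and $x_2$ from inside $(x_1,x_2)$ shows that $g(x_1),g(x_2)\in\overline{J}$, yet neither lies in $J$ by the previous paragraph. Since $\overline{J}\setminus J$ consists of at most two elements---the endpoints of $\overline{J}$ that fail to belong to $J$---and since $g(x_1)\neq g(x_2)$, these two distinct values must be \emph{precisely} the endpoints of $\overline{J}$. This forces $J$ to be the open interval with those endpoints. Combined with $0\in J$, this shows $g(x_1)$ and $g(x_2)$ have opposite signs, so $\langle x_1,x_2\rangle$ is a radial departure of $g$ (positive if $g(x_1)<0<g(x_2)$, negative otherwise).

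The orientation of the induced radial departure of $f$ then drops out by substituting into $f(J)=h((x_1,x_2))=(h(x_1),h(x_2))$. In the ``positive $g$'' subcase $J=(g(x_1),g(x_2))$, so $f((g(x_1),g(x_2)))=(f(g(x_1)),f(g(x_2)))$ is the positive-radial-departure identity for $f$ at $\langle g(x_1),g(x_2)\rangle$, giving condition~(1). In the ``negative $g$'' subcase $J=(g(x_2),g(x_1))$, and the identity reads $f((g(x_2),g(x_1)))=(f(g(x_1)),f(g(x_2)))$, which is exactly the negative-radial-departure form at $\langle g(x_2),g(x_1)\rangle$, giving condition~(2). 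The case where $h$ has a negative radial departure is entirely symmetric. I anticipate no real obstacle beyond the closure argument in the third paragraph, which is the only place the combinatorial structure of intervals in $\mathbb{R}$ genuinely enters.
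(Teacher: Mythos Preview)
Your proof is correct and follows essentially the same approach as the paper's. The paper argues the converse direction in the same way---observing that $g(x_1),g(x_2)\notin g((x_1,x_2))$ and $g(x_1)\neq g(x_2)$, hence $\langle x_1,x_2\rangle$ is a radial departure of $g$, and then reading off the orientation of the induced radial departure of $f$---though it leaves the closure argument you spell out in your third paragraph implicit.
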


\begin{proof}
If $\langle x_1,x_2 \rangle$ is a positive radial departure of $g$ and $\langle g(x_1),g(x_2) \rangle$ is a positive (respectively, negative) radial departure of $f$, then $f \circ g((x_1,x_2)) = f((g(x_1),g(x_2)))$, which equals $(f \circ g(x_1),f \circ g(x_2))$ (respectively, $(f \circ g(x_2),f \circ g(x_1))$).  Thus $\langle x_1,x_2 \rangle$ is a positive (respectively, negative) radial departure of $f \circ g$.

Similarly, if $\langle x_1,x_2 \rangle$ is a negative radial departure of $g$ and $\langle g(x_2),g(x_1) \rangle$ is a negative (respectively, positive) radial departure of $f$, then $f \circ g((x_1,x_2)) = f((g(x_2),g(x_1)))$, which equals $(f \circ g(x_1),f \circ g(x_2))$ (respectively, $(f \circ g(x_2),f \circ g(x_1))$).  Thus $\langle x_1,x_2 \rangle$ is a positive (respectively, negative) radial departure of $f \circ g$.

Conversely, suppose $\langle x_1,x_2 \rangle$ is a radial departure of $f \circ g$.  Then $g(x_1),g(x_2) \notin g((x_1,x_2))$ since $f \circ g(x_1),f \circ g(x_2) \notin f \circ g((x_1,x_2))$, and $g(x_1) \neq g(x_2)$ since $f \circ g(x_1) \neq f \circ g(x_2)$.  It follows that $\langle x_1,x_2 \rangle$ is a radial departure of $g$.  Now since $f \circ g(x_1),f \circ g(x_2) \notin f \circ g((x_1,x_2))$ and $f \circ g(x_1) \neq f \circ g(x_2)$, we deduce:
\begin{enumerate}
\item if $\langle x_1,x_2 \rangle$ is a positive radial departure of $g$ then $\langle g(x_1),g(x_2) \rangle$ is a radial departure of $f$, and by the above argument the orientation of the radial departure $\langle g(x_1),g(x_2) \rangle$ of $f$ matches the orientation of the radial departure $\langle x_1,x_2 \rangle$ of $f$; and
\item if $\langle x_1,x_2 \rangle$ is a negative radial departure of $g$ then $\langle g(x_2),g(x_1) \rangle$ is a radial departure of $f$, and by the above argument the orientation of the radial departure $\langle g(x_2),g(x_1) \rangle$ of $f$ is opposite to the orientation of the radial departure $\langle x_1,x_2 \rangle$ of $f$.
\end{enumerate}
\end{proof}

\section{Radial departures and embeddings}
\label{sec:embeddings}

The main result of this section, Proposition~\ref{prop:embed 0 accessible}, essentially appears in \cite[Lemma 7.2]{anusic-bruin-cinc2020}, using a different language.  We present an alternative argument here, to keep this treatment self-contained.

Let $\mathbb{H} = \{\langle x,y \rangle \in \mathbb{R}^2: x \geq 0\}$ denote the closed right half plane.  We use the usual Euclidean metric on the (half) plane, $d \left( \langle x_1,y_1 \rangle, \langle x_2,y_2 \rangle \right) = \|\langle x_1,y_1 \rangle - \langle x_2,y_2 \rangle\|$, where $\|p\|$ denotes the Euclidean norm of the point $p \in \mathbb{R}^2$.

\begin{lem}
\label{lem:tuck}
Suppose $f \colon [-1,1] \to [-1,1]$ is a map with $f(0) = 0$ all of whose radial departures have the same orientation.  Then for any $\varepsilon > 0$ there exists an embedding $\Phi$ of $[-1,1]$ into $\mathbb{H}$ such that $\Phi(0) = \langle 0,0 \rangle$, $\Phi([-1,1]) \cap \partial \mathbb{H} = \{\langle 0,0 \rangle\}$, and $\|\Phi(x) - \langle 0,f(x) \rangle\| < \varepsilon$ for all $x \in [-1,1]$.
\end{lem}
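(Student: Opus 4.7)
The plan is to construct $\Phi$ in the form $\Phi(x) = \langle \delta(x), f(x) \rangle$ for a continuous function $\delta \colon [-1,1] \to [0, \varepsilon)$ satisfying $\delta(0) = 0$ and $\delta(x) > 0$ for $x \neq 0$.  Given any such $\delta$, the three stated conditions on $\Phi$ are automatic: $\Phi(0) = \langle 0, 0 \rangle$ by construction, $\Phi^{-1}(\partial\mathbb{H}) = \{0\}$ because $\delta$ vanishes only at $0$, and $\|\Phi(x) - \langle 0, f(x) \rangle\| = \delta(x) < \varepsilon$.  The entire task therefore reduces to arranging that $\Phi$ be injective, which amounts to requiring that $\delta$ take distinct values on any two distinct points of $[-1,1]$ that share an $f$-value.

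Without loss of generality, we may assume every radial departure of $f$ is positively oriented; the negatively oriented case reduces to this one by first embedding $-f$ and then reflecting $\mathbb{H}$ across the $x$-axis, which preserves $\partial \mathbb{H}$ and the distance to $\langle 0, f(x) \rangle$.  After an initial piecewise-linear approximation of $f$ that agrees with $f$ at $0$ and is close enough not to introduce radial departures of the wrong orientation, it suffices to treat a piecewise-linear $f$.  Let $-1 = c_{-p} < \cdots < c_0 = 0 < \cdots < c_q = 1$ be the turning points of $f$; assign $\delta(c_k)$ to be a small positive value for $k \neq 0$ that is strictly monotone in $|k|$ on each of the two halves, and extend $\delta$ linearly across each subinterval $[c_k, c_{k+1}]$.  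This choice makes $\delta$ strictly monotone on each of $[-1, 0]$ and $[0, 1]$, so $\Phi$ restricted to either half is automatically an embedding.

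The main difficulty is ruling out \emph{cross-side} coincidences: pairs $x_- \in [-1,0)$ and $x_+ \in (0, 1]$ with $\delta(x_-) = \delta(x_+)$ and $f(x_-) = f(x_+)$.  Strict monotonicity of $\delta$ on each half produces a strictly decreasing continuous bijection $\psi$ matching equal-$\delta$ points across the two halves, so a conflict is precisely a zero of $x_+ \mapsto f(\psi(x_+)) - f(x_+)$ with $x_+ > 0$.  Our freedom lies in choosing the values $\delta(c_k)$, which reparametrizes $\psi$.  The hypothesis enters through the following claim, which I expect to be the heart of the argument: if a cross-side conflict persisted for every admissible choice of $\delta$, then taking an extremal conflict pair and invoking Proposition~\ref{prop:meet join} (closure of positive radial departures under meet and join) together with Proposition~\ref{prop:alt dep nested} (on nesting of opposite-orientation pairs), supplemented by an intermediate-value argument on $f$ restricted to the open interval between the two conflict coordinates, one produces a pair $\langle x_1, x_2 \rangle$ with $f((x_1, x_2)) = (f(x_2), f(x_1))$---a \emph{negative} radial departure, contradicting the hypothesis.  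The main obstacle is therefore to carry out this extraction of a negative radial departure from a persistent cross-side conflict; once that is achieved, a generic choice of the values $\delta(c_k)$ avoids every conflict and yields the required embedding.
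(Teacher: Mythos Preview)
Your proposal has a genuine gap at precisely the point you yourself flag with ``I expect to be the heart of the argument''.  The contrapositive you are aiming for is: if $f$ has no negative radial departure, then \emph{some} admissible $\delta$ yields no cross-side conflict.  You have neither proved this nor indicated a concrete mechanism for it.  The gestures toward Propositions~\ref{prop:meet join} and~\ref{prop:alt dep nested} do not do the work: those results are about pairs of radial departures, whereas a ``conflict pair'' $(x_-,x_+)$ with $f(x_-)=f(x_+)$ and $\delta(x_-)=\delta(x_+)$ is typically \emph{not} a radial departure of $f$ at all (indeed $f(x_-)=f(x_+)$ rules that out), so there is nothing for those propositions to act on.  What ``extremal conflict pair'' means, and how an intermediate-value argument would manufacture the required interval $(x_1,x_2)$ with $f((x_1,x_2))=(f(x_2),f(x_1))$, are both left unspecified.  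Finally, even granting the claim, ``a generic choice of $\delta(c_k)$ avoids every conflict'' does not follow: the conflict locus $\{(x_-,x_+):f(x_-)=f(x_+)\}$ is one-dimensional in the $(x_-,x_+)$-square, as is the graph of $\psi$, so transversality gives isolated intersection points, not emptiness.

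There is also a structural issue with insisting on the exact form $\Phi(x)=\langle\delta(x),f(x)\rangle$.  The paper's proof ends up with $\Phi(x)=\langle\delta(x),f'(x)\rangle$ for a small perturbation $f'$ of $f$, and the perturbation is essential: the core combinatorial step (Claim~\ref{claim:psi+ psi-}, proved by induction on the number of right contour points, using the no-negative-radial-departure hypothesis at each stage) only yields the weak inequality $f(\psi_+(t))\geq f(\psi_-(t))$, and equality can and does occur.  The paper resolves this by bumping $f$ up on $(0,1]$ and down on $[-1,0)$ to force strictness, then collapsing the resulting horizontal plateaus.  Your scheme forbids exactly this perturbation.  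If you are willing to allow $\Phi(x)=\langle\delta(x),f'(x)\rangle$ with $\|f'-f\|<\varepsilon/2$, then the argument you still need is the inequality $f(\psi_+(t))\geq f(\psi_-(t))$ for a suitable simultaneous traversal---and that is the inductive content of the paper's proof, not a consequence of Propositions~\ref{prop:meet join} or~\ref{prop:alt dep nested}.  (A smaller point: your piecewise-linear approximation ``close enough not to introduce radial departures of the wrong orientation'' also needs justification; the paper handles this by a specific modification, making $f$ monotone between a negative left and a positive right contour point.)
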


\begin{proof}
Suppose $f$ has no negative radial departures (the case where $f$ has no positive radial departures is similar).

Without loss of generality, we may assume that $f$ has only finitely many contour points.  Indeed, if $f$ has infinitely many contour points, we may approximate $f$ arbitrarily closely by maps with only finitely many contour points, all of whose radial departures are positive, e.g.\ by making $f$ monotone between some negative left contour point and some positive right contour point.

The proof consists of three steps.  In the first step, we show that we can simultaneously traverse the graphs of $f {\restriction}_{[0,1]}$ (from $0$ to $1$) and $f {\restriction}_{[-1,0]}$ (from $0$ to $-1$) in such a way that the point on the right side is always above (or equal to) the point on the left side.  In the second step, we perturb $f$ a small amount so that during this traversal the point on the right side is always strictly above the point on the left side.  This enables us to embed an arc in $\mathbb{H}$ with both the right and left sides of the graph of this perturbed $f$ laid out to the right starting from $\langle 0,0 \rangle$; however, this arc may have some constant (horizontal) sections at some contour points of $f$.  In the third step, we contract these horizontal segments to points, to obtain the desired embedding $\Phi$.  Figure~\ref{fig:tuck} illustrates the outcomes of these steps for a sample function $f$.

\begin{figure}
\begin{center}
\includegraphics{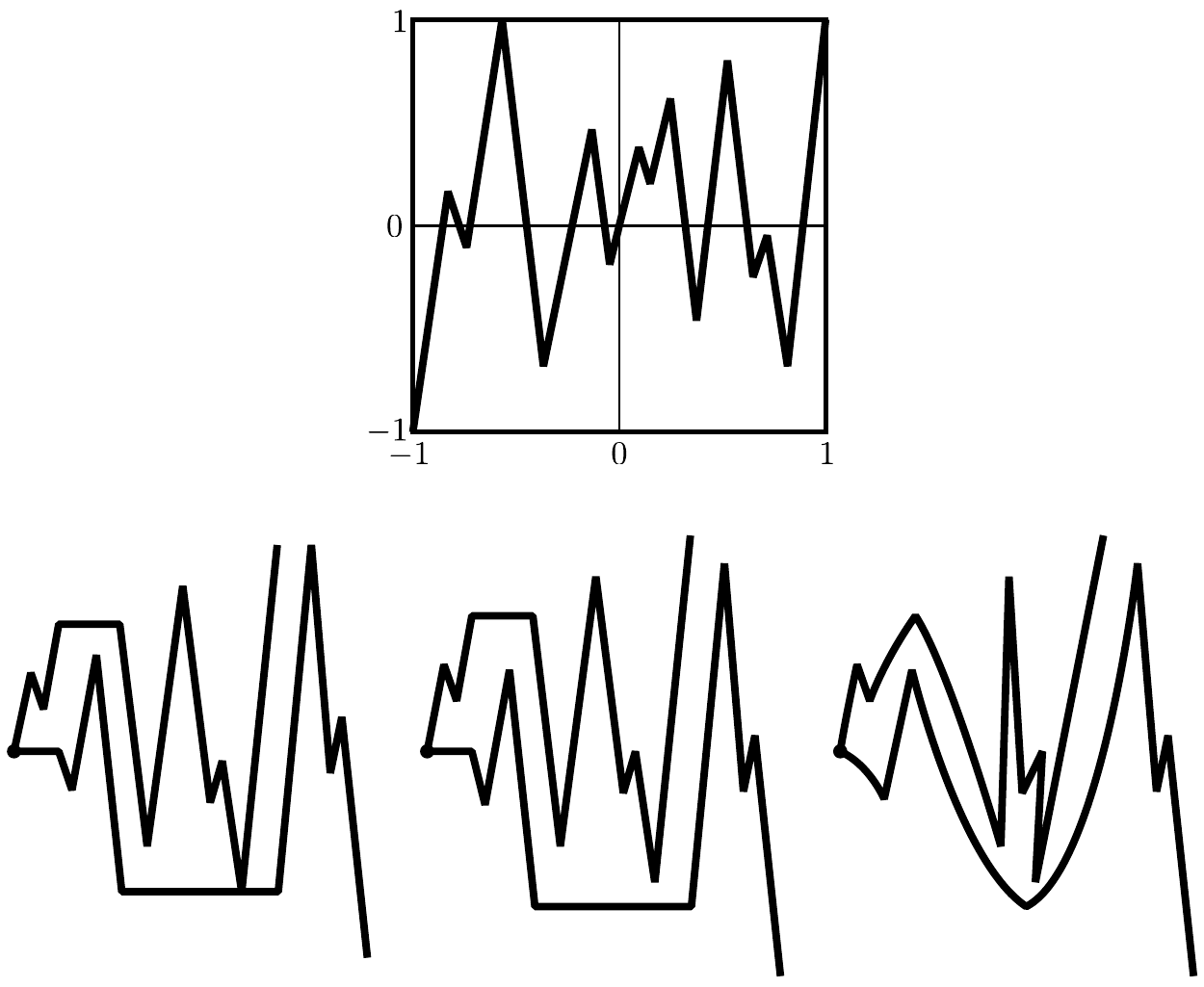}
\end{center}

\caption{Above: a sample function $f$ with only positive radial departures.  Below: the outcomes of the three steps of the proof of Lemma~\ref{lem:tuck}.}
\label{fig:tuck}
\end{figure}

\medskip
\noindent \textbf{Step 1.}  We show that it is possible to traverse the right and left sides of the graph of $f$ simultaneously in such a way that the $y$-value of the point on the right is always greater or equal to the $y$-value of the point on the left.  Precisely, we show below that there exist onto, monotone maps $\psi_+ \colon [0,r_+] \to [0,1]$ and $\psi_- \colon [0,r_-] \to [-1,0]$, for some $r_+,r_- > 0$, such that $\psi_+(0) = \psi_-(0) = 0$, $f(\psi_+(t)) \geq f(\psi_-(t))$ for all $t \in [0, \min\{r_+,r_-\}]$, and these maps are not locally constant except possibly at points $t$ for which the value is a contour point of $f$.  The bulk of the work involved is to show that it is possible to completely traverse one side of the graph of $f$, while simultaneously traversing a part of the other side in this way, which is the content of the following Claim.

\begin{claim}
\label{claim:psi+ psi-}
There exist maps $\psi_+ \colon [0,r] \to [0,1]$ and $\psi_- \colon [0,r] \to [-1,0]$, for some $r > 0$, such that:
\begin{enumerate}
\item $\psi_+(0) = \psi_-(0) = 0$, $\psi_+$ is non-decreasing, and $\psi_-$ is non-increasing;
\item For each $x \in (0,1]$ (respectively, $x \in [-1,0)$), if $\psi_+^{-1}(x)$ (respectively, $\psi_-^{-1}(x)$) is not a singleton, then $x$ is a positive right (respectively, negative left) contour point of $f$ (possibly $x = 0$);
\item $f(\psi_+(t)) \geq f(\psi_-(t))$ for all $t$;
\item Either $\psi_+$ or $\psi_-$ is onto.
\end{enumerate}
\end{claim}

\begin{proof}[Proof of Claim~\ref{claim:psi+ psi-}]
\renewcommand{\qedsymbol}{\textsquare (Claim~\ref{claim:psi+ psi-})}
We proceed by induction on the number of right contour points of $f$.  To assist with the induction, we add one further property to the above four:
\begin{enumerate}
\setcounter{enumi}{4}
\item If $\psi_-$ is not onto, then $\psi_-(r)$ is either $0$ or a negative left contour point of $f$.
\end{enumerate}

For the base case, suppose $f$ has only one (non-zero) right contour point $\alpha$.  If $f(\alpha) > 0$, then it suffices to let $\psi_+$ linearly parameterize $[0,1]$ and let $\psi_- \equiv 0$.  Suppose now that $f(\alpha) < 0$.  Then first (non-zero) left contour point $\beta$ of $f$ must be negative, since otherwise $\langle \beta,\alpha \rangle$ would be a negative radial departure of $f$.  If $\beta$ is the only left contour point of $f$, then it suffices to let $\psi_+ \equiv 0$ and let $\psi_-$ linearly parameterize $[-1,0]$.  Otherwise, $f(\beta) \leq f(\alpha)$, since otherwise $\langle \beta',\alpha \rangle$ would be a negative radial departure of $f$, where $\beta'$ is the next left contour point of $f$ after $\beta$ (going from $0$ towards $-1$).  In this case, let $0 < r_1 < r_2$, let $\psi_+{\restriction}_{[0,r_1]} \equiv 0$ and let $\psi_-{\restriction}_{[0,r_1]}$ linearly parameterize $[\beta,0]$, then let $\psi_+{\restriction}_{[r_1,r_2]}$ linearly parameterize $[0,1]$ while $\psi_-{\restriction}_{[r_1,r_2]} \equiv \beta$.

For the inductive step, suppose the Claim is established for maps with $n$ right contour points, and suppose $f$ has $n+1$ right contour points.  Let $\alpha_n$,$\alpha_{n+1}$ be the last two right contour points of $f$, and apply induction to $f{\restriction}_{[0,\alpha_n]}$ to get $\psi_+ \colon [0,r] \to [0,\alpha_n]$ and $\psi_- \colon [0,r] \to [-1,0]$ satisfying the properties (1)--(5).  If $\psi_-$ is onto, then we are already done.  Suppose then that $\psi_-$ is not onto, which means that $\psi_+(r) = \alpha_n$ and $\psi_-(r)$ is a negative left contour point $\beta$ of $f$.  Let $r < r_1 < r_2$.  If $f(\alpha_{n+1}) \geq f(\beta)$, then it suffices to let $\psi_+ {\restriction}_{[r,r_1]}$ linearly parameterize $[\alpha_n,1]$ and let $\psi_- {\restriction}_{[r,r_1]} \equiv \beta$.  Suppose now that $f(\alpha_{n+1}) < f(\beta)$, which means $\alpha_n$ is a positive right contour point.  If $f(x) \leq f(\alpha_n)$ for all $x \in [-1,\beta]$, then it suffices to let $\psi_+ {\restriction}_{[r,r_1]} \equiv \alpha_n$ and let $\psi_- {\restriction}_{[r,r_1]}$ linearly parameterize $[-1,\beta]$.  On the other hand, if $f(x) > f(\alpha_n)$ for some $x \in [-1,\beta)$, then there must be a left contour point $\beta' \in (x,\beta)$ with $f(\beta') \leq f(\alpha_{n+1})$ and $f(x') \leq f(\alpha_n)$ for all $x' \in [\beta',\beta]$, since otherwise $\langle x,\alpha_{n+1} \rangle$ would be a negative radial departure of $f$.  In this case, let $\psi_+{\restriction}_{[r,r_1]} \equiv \alpha_n$ and let $\psi_-{\restriction}_{[r,r_1]}$ linearly parameterize $[\beta',\beta]$, then let $\psi_+{\restriction}_{[r_1,r_2]}$ linearly parameterize $[\alpha_n,1]$ while $\psi_-{\restriction}_{[r_1,r_2]} \equiv \beta'$.

This completes the inductive argument.
\end{proof}

Now we may assume $r < \frac{\varepsilon}{2}$.  If $\psi_+$ is not onto, let $r_- = r$, let $r < r_+ < \frac{\varepsilon}{2}$ and let $\psi_+ {\restriction}_{[r,r_+]}$ linearly parameterize $[\psi_+(r),1]$.  If $\psi_-$ is not onto, let $r_+ = r$, let $r < r_- < \frac{\varepsilon}{2}$ and let $\psi_- {\restriction}_{[r,r_-]}$ linearly parameterize $[-1,\psi_-(r)]$.  If both $\psi_+$ and $\psi_-$ are onto, then let $r_+ = r_- = r$.

\medskip
\noindent \textbf{Step 2.}
By a small perturbation of $f$, we may construct a map $f' \colon [-1,1] \to [-1,1]$ with $f'(0) = 0$ such that $f'(x) > f(x)$ for all $x \in (0,1]$, $f'(x) < f(x)$ for all $x \in [-1,0)$, and $\|\langle \frac{\varepsilon}{2},f'(x) \rangle - \langle 0,f(x) \rangle\| < \varepsilon$ for all $x \in [-1,1]$.  Define $\Psi_+ \colon [0,r_+] \to \mathbb{H}$ and $\Psi_- \colon [0,r_-] \to \mathbb{H}$ by $\Psi_+(t) = \langle t,f'(\psi_+(t)) \rangle$ and $\Psi_-(t) = \langle t,f'(\psi_-(t)) \rangle$.  Note that $\Psi_+(t_1) = \Psi_-(t_2)$ only when $t_1 = t_2 = 0$, hence $\Psi_+([0,r_+]) \cup \Psi_-([0,r_-])$ is an arc in $\mathbb{H}$.

\medskip
\noindent \textbf{Step 3.}
Let $M \colon \mathbb{H} \to \mathbb{H}$ be a monotone map such that $M(\langle 0,0 \rangle) = \langle 0,0 \rangle$, the $y$-coordinate of $M(p)$ equals the $y$-coordinate of $p$ for each $p \in \mathbb{H}$, $M([0,\frac{\varepsilon}{2}] \times \mathbb{R}) \subseteq [0,\frac{\varepsilon}{2}] \times \mathbb{R}$, $M(\Psi_+(\psi_+^{-1}(x)))$ (respectively, $M(\Psi_-(\psi_-^{-1}(x)))$) is a singleton for each $x \in [0,1]$ (respectively, $x \in [-1,0]$), and $M$ is otherwise one-to-one.  Finally, define $\Phi \colon [-1,1] \to M(\Psi_+([0,r_+]) \cup \Psi_-([0,r_-]))$ by $\Phi(x) = M(\Psi_+(\psi_+^{-1}(x)))$ if $x \in [0,1]$ and $\Phi(x) = M(\Psi_-(\psi_-^{-1}(x)))$ if $x \in [-1,0]$.  This $\Phi$ has the desired properties.
\end{proof}

We remark that the converse of Lemma~\ref{lem:tuck} is also true: if $f$ has both a positive and a negative radial departure, then for some $\varepsilon > 0$ there does not exist a map $\Phi$ as in the Lemma.  However, we will not need this result, so we do not include a proof; see also \cite[Lemma 7.6]{anusic-bruin-cinc2020}.

\begin{prop}
\label{prop:embed 0 accessible}
Let $f_n \colon [-1,1] \to [-1,1]$, $n = 1,2,\ldots$, be maps with $f_n(0) = 0$ for each $n$.  Suppose that for each $n$, all radial departures of $f_n$ have the same orientation.  Then there exists an embedding of $X = \varprojlim \left \langle [-1,1], f_n \right \rangle$ into $\mathbb{R}^2$ for which the point $\langle 0,0,\ldots \rangle \in X$ is accessible.
\end{prop}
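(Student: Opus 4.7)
The plan is to inductively construct embeddings $E_n \colon [-1,1] \to \mathbb{R}^2$ of the factor intervals with $E_n(0) = \langle 0,0 \rangle =: p_0$ and $\|E_{n+1}(x) - E_n(f_n(x))\| < 2^{-n}$ for all $x \in [-1,1]$, maintaining throughout that $E_n([-1,1]) \subseteq \mathbb{H}$ meets the $y$-axis $\partial\mathbb{H}$ only at $p_0$. The Anderson-Choquet Embedding Theorem \cite{anderson-choquet1959} will then provide an embedding $E \colon X \to \mathbb{R}^2$ realized as the uniform limit $E(\langle x_k\rangle) = \lim_n E_n(x_n)$. The geometric constraint passes to the Hausdorff limit and forces $E(X) \cap \partial \mathbb{H} = \{p_0\}$, so any straight segment in the open left half-plane terminating at $p_0$ witnesses accessibility of $p_0 = E(\langle 0,0,\ldots\rangle)$.

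For the inductive step, start with $E_1(x) = \langle 0, x \rangle$; given $E_n$, apply Lemma~\ref{lem:tuck} to $f_n$ with a parameter $\varepsilon_n > 0$ to be specified, obtaining an embedding $\Phi_n \colon [-1,1] \to \mathbb{H}$ with $\Phi_n(0) = \langle 0,0\rangle$, $\Phi_n([-1,1]) \cap \partial \mathbb{H} = \{\langle 0,0\rangle\}$, and $\|\Phi_n(x) - \langle 0, f_n(x)\rangle\| < \varepsilon_n$. Since $E_n([-1,1])$ is a tame embedded arc in $\mathbb{H}$ meeting $\partial\mathbb{H}$ only at $p_0$, a Schoenflies-type argument yields a homeomorphism $T_n \colon \mathbb{R}^2 \to \mathbb{R}^2$ that restricts to $\langle 0, y\rangle \mapsto E_n(y)$ on $\{0\} \times [-1,1]$ and carries $\mathbb{H}$ into itself. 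Setting $E_{n+1} := T_n \circ \Phi_n$ produces an embedding with $E_{n+1}(0) = p_0$, $E_{n+1}([-1,1]) \subseteq \mathbb{H}$ meeting $\partial\mathbb{H}$ only at $p_0$, and $\|E_{n+1}(x) - E_n(f_n(x))\|$ bounded by the modulus of continuity of $T_n$ evaluated at $\varepsilon_n$; choosing $\varepsilon_n$ sufficiently small after $T_n$ is fixed secures $\|E_{n+1}(x) - E_n(f_n(x))\| < 2^{-n}$.

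The estimates imply that the composite maps $X \to \mathbb{R}^2$, $\langle x_k \rangle \mapsto E_n(x_n)$, form a uniformly Cauchy sequence whose limit $E$ is continuous. Injectivity of $E$ is the content of the Anderson-Choquet theorem applied in this setting, possibly after further refining the $\varepsilon_n$ to ensure its hypotheses. Finally, since every $E_n([-1,1]) \subseteq \mathbb{H}$ meets $\partial\mathbb{H}$ only at $p_0$, the limit set $E(X)$ lies in $\mathbb{H}$ and meets $\partial\mathbb{H}$ only at $p_0$, so accessibility is established by any straight segment in the open left half-plane ending at $p_0$.

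The main technical obstacle is designing the transport homeomorphisms $T_n$ to preserve both required properties at once: parameterizing the (potentially complicated) arc $E_n([-1,1])$ in the prescribed way while simultaneously mapping $\mathbb{H}$ into $\mathbb{H}$. This is feasible precisely because Lemma~\ref{lem:tuck} places each $\Phi_n$ entirely in the closed right half-plane with only the origin on the boundary; consequently, the free open left half-plane is preserved throughout the iteration and accessibility of $p_0$ survives in the limit.
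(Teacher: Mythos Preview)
Your approach is essentially the same as the paper's: both invoke Anderson--Choquet, apply Lemma~\ref{lem:tuck} at each stage, and transport via a plane homeomorphism straightening the previous arc (your $T_n$ is exactly the paper's $H^{-1}$, and the paper likewise fixes a segment heading into the left half-plane to keep the image in $\mathbb{H}$). Two minor slips worth cleaning up: your initial $E_1(x)=\langle 0,x\rangle$ lies entirely on $\partial\mathbb{H}$ rather than meeting it only at $p_0$, and the claim $E(X)\cap\partial\mathbb{H}=\{p_0\}$ does not obviously follow from the invariant in the Hausdorff limit---but neither matters, since $E(X)\subseteq\mathbb{H}$ (which does pass to the limit) already suffices for accessibility via a segment in the open left half-plane.
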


\begin{proof}
We apply the Anderson-Choquet Embedding Theorem \cite{anderson-choquet1959}.  It suffices to prove that for any embedding $\Omega_n \colon [-1,1] \to \mathbb{H}$ such that $\Omega_n(0) = \langle 0,0 \rangle$, and for any $\varepsilon > 0$, there exists an embedding $\Omega_{n+1} \colon [-1,1] \to \mathbb{H}$ such that $\Omega_{n+1}(0) = \langle 0,0 \rangle$ and $\|\Omega_{n+1}(x) - \Omega_n(f_n(x))\| < \varepsilon$ for all $x \in [-1,1]$.

Let $A$ denote the straight segment in $\mathbb{R}^2$ from $\langle 0,0 \rangle$ to $\langle -1,0 \rangle$, and let $I$ denote the straight segment in $\mathbb{R}^2$ from $\langle 0,-1 \rangle$ to $\langle 0,1 \rangle$.

Suppose $\Omega_n \colon [-1,1] \to \mathbb{H}$ is an embedding such that $\Omega_n(0) = \langle 0,0 \rangle$, and let $\varepsilon > 0$ be arbitrary.  Let $H \colon \mathbb{R}^2 \to \mathbb{R}^2$ be a homeomorphism such that $H(\Omega_n(x)) = \langle 0,x \rangle$ for each $x \in [-1,1]$, and $H {\restriction}_A = \mathrm{id}_A$.  For some small $\varepsilon' > 0$, apply Lemma~\ref{lem:tuck} to construct an embedding $\Phi \colon [-1,1] \to \mathbb{H}$ such that (1) $\Phi(0) = \langle 0,0 \rangle$; (2) $\Phi([-1,1]) \cap \partial \mathbb{H} = \{\langle 0,0 \rangle\}$; and (3) $\|\Phi(x) - \langle 0,f_n(x) \rangle\| < \varepsilon'$ for all $x \in [-1,1]$.  Let $\Omega_{n+1} = H^{-1} \circ \Phi$.  Clearly $\Omega_{n+1}(0) = \langle 0,0 \rangle$.  For sufficiently small $\varepsilon'$, we have (from (2)) that $\Omega_{n+1}$ is an embedding of $[-1,1]$ into $\mathbb{H}$, and (from (3) and uniform continuity of $H$ in a neighborhood of $A \cup I$) $\|\Omega_{n+1}(x) - \Omega_n(f_n(x))\| < \varepsilon$ for all $x \in [-1,1]$, as desired.
\end{proof}

\section{Radial contour factorization}
\label{sec:rad contour}

In this section we apply the contour factorization from Section~\ref{sec:prelim} to both the left and right ``halves'' of a map $f \colon [-1,1] \to [-1,1]$ with $f(0) = 0$.  Recall that we will always assume that $f {\restriction}_{[-1,0]}$ and $f {\restriction}_{[0,1]}$ are both non-constant.  The resulting factorization will be used in Section~\ref{sec:no twins} to produce alternative inverse limit representations of a given arc-like continuum.

Recall that $\mathsf{r} \colon [0,1] \to [-1,0]$ is the function $\mathsf{r}(x) = -x$.

\begin{defn}
\label{defn:rad contour factor}
Let $f \colon [-1,1] \to [-1,1]$ be a piecewise-linear map with $f(0) = 0$.  The \emph{radial contour factor} of $f$ is the piecewise-linear map $t_f \colon [-1,1] \to [-1,1]$ such that:
\begin{enumerate}
\item $t_f {\restriction}_{[0,1]}$ is the contour factor of $f {\restriction}_{[0,1]}$ (in the sense of Definition~\ref{defn:contour factor}), and;
\item $t_f {\restriction}_{[-1,0]} \circ \mathsf{r}$ is the contour factor of $f {\restriction}_{[-1,0]} \circ \mathsf{r}$ (in the sense of Definition~\ref{defn:contour factor}).
\end{enumerate}
A \emph{radial meandering factor} of $f$ is a (piecewise-linear) map $s \colon [-1,1] \to [-1,1]$ which is \emph{sign preserving} (i.e.\ $s(x) \geq 0$ for all $x \geq 0$ and $s(x) \leq 0$ for all $x \leq 0$) and such that $f = t_f \circ s$.
\end{defn}

It follows from Proposition~\ref{prop:meandering factor} that there always exists at least one radial meandering factor of any given $f$, but it is not necessarily unique.  Observe that if $s$ is any radial meandering factor of $f$, then $s$ has no negative radial departures.  In fact, for any $y_1 < 0 < y_2$, there exists a positive radial departure $\langle x_1,x_2 \rangle$ of $s$ such that $s(x_1) = y_1$ and $s(x_2) = y_2$.

We develop several basic properties of radial contour factors and radial departures in the remainder of this section.

\begin{lem}
\label{lem:s match}
Let $f \colon [-1,1] \to [-1,1]$ be a piecewise-linear map with $f(0) = 0$, let $t_f$ be the radial contour factor of $f$, and let $\langle x_1,x_2 \rangle$ be a radial departure of $f$.  Then there exists a radial departure $\langle y_1,y_2 \rangle$ of $t_f$ such that for any map $s \colon [-1,1] \to [-1,1]$ with $s(0) = 0$ and $f = t_f \circ s$:
\begin{enumerate}
\item $\langle x_1,x_2 \rangle$ is a positive radial departure of $s$; and
\item $s(x_1) = y_1$ and $s(x_2) = y_2$.
\end{enumerate}
\end{lem}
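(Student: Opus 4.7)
The plan is to combine Proposition~\ref{prop:comp dep} with the uniqueness properties in Lemma~\ref{lem:dep values unique}, using the existence of a radial meandering factor (from Proposition~\ref{prop:meandering factor}) as a canonical reference to pin down the correct orientation.

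First, I would invoke Proposition~\ref{prop:meandering factor} to obtain a radial meandering factor $s^\ast$ of $f$. Since $s^\ast$ is sign-preserving, it has no negative radial departures, as noted just after Definition~\ref{defn:rad contour factor}. The converse direction of Proposition~\ref{prop:comp dep} applied to $f = t_f \circ s^\ast$ yields that $\langle x_1, x_2 \rangle$ is itself a radial departure of $s^\ast$, and the absence of negative radial departures of $s^\ast$ forces this to be a positive radial departure. The forward direction of Proposition~\ref{prop:comp dep} then gives that $\langle s^\ast(x_1), s^\ast(x_2) \rangle$ is a radial departure of $t_f$ whose orientation matches that of $\langle x_1, x_2 \rangle$ in $f$. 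I would set $y_1 = s^\ast(x_1)$ and $y_2 = s^\ast(x_2)$.

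Next, I would show that for an arbitrary $s$ satisfying the hypotheses of the lemma, $\langle x_1, x_2 \rangle$ must be a positive radial departure of $s$. The converse direction of Proposition~\ref{prop:comp dep} gives that it is a radial departure of $s$ of some orientation; suppose for contradiction that it is a negative one. Then Proposition~\ref{prop:comp dep} would force $\langle s(x_2), s(x_1) \rangle$ to be a radial departure of $t_f$ of orientation opposite to that of $\langle x_1, x_2 \rangle$ in $f$, and in particular opposite to the orientation of $\langle y_1, y_2 \rangle$ produced above. But both $\langle y_1, y_2 \rangle$ and $\langle s(x_2), s(x_1) \rangle$ would then be radial departures of $t_f$ of opposite orientations sharing the common image set $\{f(x_1), f(x_2)\}$, directly contradicting Lemma~\ref{lem:dep values unique}(2). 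This establishes~(1). Conclusion~(2) then follows immediately: the forward direction of Proposition~\ref{prop:comp dep} makes $\langle s(x_1), s(x_2) \rangle$ a radial departure of $t_f$ sharing both image set and orientation with $\langle y_1, y_2 \rangle$, so Lemma~\ref{lem:dep values unique}(1) yields $\langle s(x_1), s(x_2) \rangle = \langle y_1, y_2 \rangle$.

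The main obstacle is correctly ruling out the possibility that $\langle x_1, x_2 \rangle$ is a negative radial departure of some factorization $s$. The key trick is to bootstrap from a canonical sign-preserving meandering factor $s^\ast$ to produce a reference radial departure of $t_f$ with a definite orientation, and then apply the disjoint-image-sets clause of Lemma~\ref{lem:dep values unique}(2) to exclude the opposite orientation for every other factorization. Once this orientation issue is settled, the rest of the argument is a direct application of Proposition~\ref{prop:comp dep} and Lemma~\ref{lem:dep values unique}(1).
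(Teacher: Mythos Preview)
Your proposal is correct and follows essentially the same approach as the paper's proof: fix a radial meandering factor $s^\ast$ (the paper calls it $s_0$), use Proposition~\ref{prop:comp dep} to obtain the reference radial departure $\langle y_1,y_2\rangle$ of $t_f$, and then for an arbitrary $s$ rule out the negative-orientation alternative via Lemma~\ref{lem:dep values unique}(2) and pin down $s(x_1),s(x_2)$ via Lemma~\ref{lem:dep values unique}(1). The structure and the key ingredients are identical.
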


\begin{proof}
First let $s_0$ be a radial meandering factor of $f$.  Since $f = t_f \circ s_0$, we have by Proposition~\ref{prop:comp dep} that $\langle x_1,x_2 \rangle$ is a positive radial departure of $s_0$ (positive because $s_0$ is a radial meandering factor), and $\langle s_0(x_1),s_0(x_2) \rangle$ is a radial departure of $t_f$ with the same orientation as $\langle x_1,x_2 \rangle$ has for $f$.  Let $y_1 = s_0(x_1)$ and $y_2 = s_0(x_2)$.

Now let $s \colon [-1,1] \to [-1,1]$ be any map with $s(0) = 0$ and $f = t_f \circ s$.  According to Proposition~\ref{prop:comp dep}, we have that either:
\begin{enumerate}[label=(\alph{*})]
\item \label{s option a} $\langle x_1,x_2 \rangle$ is a positive radial departure of $s$ and $\langle s(x_1),s(x_2) \rangle$ is a radial departure of $t_f$ with the same orientation as $\langle x_1,x_2 \rangle$ has for $f$; or
\item \label{s option b} $\langle x_1,x_2 \rangle$ is a negative radial departure of $s$ and $\langle s(x_2),s(x_1) \rangle$ is a radial departure of $t_f$ with the opposite orientation to what $\langle x_1,x_2 \rangle$ has for $f$.
\end{enumerate}
We claim that alternative \ref{s option b} is impossible.  Indeed, suppose for a contradiction that \ref{s option b} holds.  Then since $\{t_f(s_0(x_1)),t_f(s_0(x_2))\} = \{t_f(s(x_2)),t_f(s(x_1))\}$, we have a contradiction with Lemma~\ref{lem:dep values unique}(2), as $\langle s_0(x_1),s_0(x_2) \rangle$ and $\langle s(x_2),s(x_1) \rangle$ are radial departures of $t_f$ with opposite orientations.

Therefore \ref{s option a} holds, so $\langle x_1,x_2 \rangle$ is a positive radial departure of $s$.  Moreover, since $\{t_f(s_0(x_1)),t_f(s_0(x_2))\} = \{t_f(s(x_1)),t_f(s(x_2))\}$, we have by Lemma~\ref{lem:dep values unique}(1) that $s(x_1) = s_0(x_1) = y_1$ and $s(x_2) = s_0(x_2) = y_2$.
\end{proof}

\begin{defn}
\label{defn:same deps}
Let $f,g \colon [-1,1] \to [-1,1]$ be maps with $f(0) = g(0) = 0$.  We say \emph{$f$ and $g$ have the same radial departures} if for any $y_1,y_2 \in [-1,1]$, there exists a radial departure $\langle x_1,x_2 \rangle$ of $f$ with $f(x_1) = y_1$ and $f(x_2) = y_2$ if and only if there exists a radial departure $\langle x_1',x_2' \rangle$ of $g$ with $g(x_1') = y_1$ and $g(x_2') = y_2$.
\end{defn}

\begin{lem}
\label{lem:f tf same dep}
Let $f \colon [-1,1] \to [-1,1]$ be a piecewise-linear map with $f(0) = 0$, and let $t_f$ be the radial contour factor of $f$.  Then $f$ and $t_f$ have the same radial departures.
\end{lem}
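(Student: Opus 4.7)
The plan is to verify both directions of the biconditional in Definition~\ref{defn:same deps}, with the radial meandering factor serving as the bridge between $f$ and $t_f$. Fix a radial meandering factor $s$ of $f$, so that $f = t_f \circ s$ and, by the remark immediately following Definition~\ref{defn:rad contour factor}, $s$ has no negative radial departures and in fact has the surjective-like property that for every pair $y_1 < 0 < y_2$ there is a positive radial departure $\langle x_1, x_2 \rangle$ of $s$ with $s(x_1) = y_1$ and $s(x_2) = y_2$.

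For the forward direction, take any radial departure $\langle x_1, x_2 \rangle$ of $f$. Apply Lemma~\ref{lem:s match} to produce a radial departure $\langle y_1, y_2 \rangle$ of $t_f$ such that, for any factorization $f = t_f \circ s$ with $s(0)=0$, one has $s(x_1) = y_1$ and $s(x_2) = y_2$. Specializing to the radial meandering factor $s$ fixed above then gives $f(x_i) = t_f(s(x_i)) = t_f(y_i)$ for $i = 1,2$, as required.

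For the backward direction, start from a radial departure $\langle y_1, y_2 \rangle$ of $t_f$; in particular $y_1 < 0 < y_2$. Invoke the property of the radial meandering factor $s$ recalled above to obtain a positive radial departure $\langle x_1, x_2 \rangle$ of $s$ with $s(x_i) = y_i$. Now apply Proposition~\ref{prop:comp dep} to the composition $f = t_f \circ s$: since $\langle x_1, x_2 \rangle$ is a positive radial departure of $s$ and $\langle s(x_1), s(x_2) \rangle = \langle y_1, y_2 \rangle$ is a radial departure of $t_f$, the conclusion of the proposition furnishes that $\langle x_1, x_2 \rangle$ is a radial departure of $f$. Moreover $f(x_i) = t_f(s(x_i)) = t_f(y_i)$, completing the proof.

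There is no real obstacle here: the main content has already been packaged into Lemma~\ref{lem:s match} (for the forward direction) and into the remark about radial meandering factors realizing all prescribed pairs $y_1 < 0 < y_2$ (for the backward direction), and Proposition~\ref{prop:comp dep} provides the clean translation between radial departures of $s$ and radial departures of $t_f \circ s$. The only thing to be careful about is that Definition~\ref{defn:same deps} concerns only the values $y_i$, not the orientations of the radial departures, so no matching of positive/negative orientations between $f$ and $t_f$ needs to be tracked.
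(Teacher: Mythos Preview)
Your proof is correct and follows essentially the same approach as the paper: fix a radial meandering factor, and use Proposition~\ref{prop:comp dep} together with the surjectivity-type property of $s$ noted after Definition~\ref{defn:rad contour factor}. The only minor difference is that for the forward direction the paper invokes Proposition~\ref{prop:comp dep} directly (since $f = t_f \circ s$ and $\langle x_1,x_2\rangle$ is a radial departure of $f$, that proposition immediately yields that $\langle s(x_1),s(x_2)\rangle$ is a radial departure of $t_f$), whereas you route through Lemma~\ref{lem:s match}, which is itself proved via Proposition~\ref{prop:comp dep}; this is a slightly heavier citation but entirely valid.
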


\begin{proof}
Let $s$ be a radial meandering factor for $f$, and let $y_1,y_2 \in [-1,1]$.

Suppose there exists a radial departure $\langle x_1,x_2 \rangle$ of $f$ with $f(x_1) = y_1$ and $f(x_2) = y_2$.  Let $x_1' = s(x_1)$ and $x_2' = s(x_2)$.  Then by Proposition~\ref{prop:comp dep}, $\langle x_1,x_2 \rangle$ is a positive radial departure of $s$ (positive because $s$ is a radial meandering factor), and $\langle x_1',x_2' \rangle$ is a radial departure of $t_f$ with $t_f(x_1') = y_1$ and $t_f(x_2') = y_2$.

Conversely, suppose there exists a radial departure $\langle x_1',x_2' \rangle$ of $t_f$ with $f(x_1') = y_1$ and $f(x_2') = y_2$.  As observed after Definition~\ref{defn:rad contour factor}, there exists a positive radial departure $\langle x_1,x_2 \rangle$ of $s$ such that $s(x_1) = x_1'$ and $s(x_2) = x_2'$.  Then by Proposition~\ref{prop:comp dep}, $\langle x_1,x_2 \rangle$ is a radial departure of $f$, with $f(x_1) = y_1$ and $f(x_2) = y_2$.
\end{proof}

\begin{cor}
\label{cor:same contour same dep}
Let $f,g \colon [-1,1] \to [-1,1]$ be piecewise-linear maps with $f(0) = g(0) = 0$.  If $t_f = t_g$, then $f$ and $g$ have the same radial departures.
\end{cor}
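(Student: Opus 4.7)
The plan is to derive this as an essentially immediate consequence of Lemma~\ref{lem:f tf same dep}, using the observation that the relation ``has the same radial departures as'' is transitive.

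First I would note that the relation defined in Definition~\ref{defn:same deps} is manifestly an equivalence relation: reflexivity and symmetry are evident from the symmetric ``if and only if'' formulation, and transitivity follows because the defining condition characterizes the existence of radial departures purely in terms of which pairs $\langle y_1, y_2 \rangle$ occur as value-pairs of radial departures, a property that composes in the obvious way.

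Then the proof reduces to a two-line argument. Assume $t_f = t_g$. By Lemma~\ref{lem:f tf same dep}, $f$ has the same radial departures as $t_f$, and $g$ has the same radial departures as $t_g$. Since $t_f = t_g$, combining these via transitivity yields that $f$ and $g$ have the same radial departures.

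There is no real obstacle here: the entire content of the corollary is packaged in Lemma~\ref{lem:f tf same dep}, and the only thing to verify is the trivial transitivity of the ``same radial departures'' relation, which I would dispatch in a single sentence rather than as a separate lemma.
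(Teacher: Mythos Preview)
Your proposal is correct and is essentially identical to the paper's own proof, which simply states that the corollary follows immediately from Lemma~\ref{lem:f tf same dep}. The transitivity observation you spell out is exactly what the paper leaves implicit.
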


\begin{proof}
This follows immediately from Lemma~\ref{lem:f tf same dep}.
\end{proof}

The converse of Corollary~\ref{cor:same contour same dep} does not hold in general.  See Figure~\ref{fig:same deps} for an example.

\begin{figure}
\begin{center}
\includegraphics{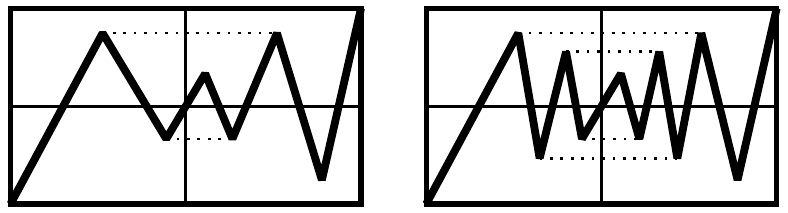}
\end{center}

\caption{An example of two maps which have the same radial departures, but do not have the same radial contour factors.}
\label{fig:same deps}
\end{figure}

\begin{prop}
\label{prop:comp same dep}
Let $f_1,f_2 \colon [-1,1] \to [-1,1]$ be maps with $f_1(0) = f_2(0) = 0$, and suppose $f_1$ and $f_1 \circ f_2$ have the same radial departures.  Then
\begin{enumerate}
\item For any radial departure $\langle y_1,y_2 \rangle$ of $f_1$, there exists a positive radial departure $\langle x_1,x_2 \rangle$ of $f_2$ with $f_2(x_1) = y_1$ and $f_2(x_2) = y_2$; and
\item For any negative radial departure $\langle x_1,x_2 \rangle$ of $f_2$, $\langle f_2(x_2),f_2(x_1) \rangle$ is not a radial departure of $f_1$.  In fact, if $\langle y_1,y_2 \rangle$ is any radial departure of $f_1$ then either
\[ f_2(x_2) < y_1 < 0 < y_2 < f_2(x_1) \quad \textrm{or} \quad y_1 < f_2(x_2) < 0 < f_2(x_1) < y_2 .\]
\end{enumerate}
\end{prop}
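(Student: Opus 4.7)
My plan is to prove (1) first and then derive (2) from (1) using Proposition~\ref{prop:alt dep nested}.

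For (1), I would start with a radial departure $\langle y_1, y_2 \rangle$ of $f_1$, set $Y_i := f_1(y_i)$ (noting $Y_1 \neq Y_2$ because $f_1$ restricted to $(y_1, y_2)$ maps onto a nondegenerate open interval), and apply the same-radial-departures hypothesis to extract a radial departure $\langle x_1, x_2 \rangle$ of $f_1 \circ f_2$ with $(f_1 \circ f_2)(x_i) = Y_i$. Proposition~\ref{prop:comp dep} then yields one of two cases: either $\langle x_1, x_2 \rangle$ is a positive radial departure of $f_2$ and $\langle f_2(x_1), f_2(x_2) \rangle$ is a radial departure of $f_1$, or $\langle x_1, x_2 \rangle$ is a negative radial departure of $f_2$ and $\langle f_2(x_2), f_2(x_1) \rangle$ is a radial departure of $f_1$. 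In either case the resulting radial departure of $f_1$ has image-value set $\{Y_1, Y_2\}$, so by Lemma~\ref{lem:dep values unique}(1) it coincides with $\langle y_1, y_2 \rangle$. In the positive case this gives $f_2(x_1) = y_1$ and $f_2(x_2) = y_2$, proving (1). In the negative case it yields $f_2(x_2) = y_1$ and $f_2(x_1) = y_2$, so $(f_1 \circ f_2)(x_2) = f_1(y_1) = Y_1$, contradicting $(f_1 \circ f_2)(x_2) = Y_2$; hence this case does not arise.

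For (2), given a negative radial departure $\langle x_1, x_2 \rangle$ of $f_2$ and any radial departure $\langle y_1, y_2 \rangle$ of $f_1$, part~(1) supplies a positive radial departure $\langle z_1, z_2 \rangle$ of $f_2$ with $f_2(z_1) = y_1$ and $f_2(z_2) = y_2$. Proposition~\ref{prop:alt dep nested} applied to the opposite-orientation pair $\langle x_1, x_2 \rangle$ and $\langle z_1, z_2 \rangle$ in $f_2$ gives either $x_1 < z_1 < 0 < z_2 < x_2$ or $z_1 < x_1 < 0 < x_2 < z_2$. In the first sub-case, $z_1, z_2 \in (x_1, x_2)$ and, since $\langle x_1, x_2 \rangle$ is a negative radial departure, $f_2((x_1,x_2)) = (f_2(x_2), f_2(x_1))$, so $y_1, y_2 \in (f_2(x_2), f_2(x_1))$; together with $y_1 < 0 < y_2$ this yields $f_2(x_2) < y_1 < 0 < y_2 < f_2(x_1)$. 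The second sub-case gives $x_1, x_2 \in (z_1, z_2)$ and $f_2((z_1, z_2)) = (y_1, y_2)$, producing the other nesting. The first assertion of (2) then follows as a corollary: if $\langle f_2(x_2), f_2(x_1) \rangle$ were a radial departure of $f_1$, substituting it for $\langle y_1, y_2 \rangle$ in the ``in fact'' conclusion would demand strict self-nesting of a pair with itself, which is impossible.

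The main obstacle is ruling out the negative case in the proof of (1). This case is not immediately incompatible with the hypothesis, since the candidate radial departure $\langle f_2(x_2), f_2(x_1) \rangle$ of $f_1$ may very well coincide with the given $\langle y_1, y_2 \rangle$; the same-radial-departures condition only guarantees existence of some corresponding pair, not one of any prescribed orientation. The contradiction only materializes after Lemma~\ref{lem:dep values unique}(1) pins down the endpoints of the two radial departures of $f_1$ and one chases image values through $f_1 \circ f_2$, exploiting $Y_1 \ne Y_2$ to close the argument.
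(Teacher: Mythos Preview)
Your proof is correct and follows essentially the same route as the paper: for (1) you invoke the same-radial-departures hypothesis, apply Proposition~\ref{prop:comp dep} to split into cases, and use Lemma~\ref{lem:dep values unique}(1) to pin down the endpoints; for (2) you use part~(1) together with Proposition~\ref{prop:alt dep nested}, exactly as the paper does. Your treatment of the negative case in (1) is in fact more explicit than the paper's, which glosses over the case split by writing directly that $\langle f_2(x_1), f_2(x_2)\rangle$ is a radial departure of $f_1$; your version makes the elimination of the negative alternative visible.
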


\begin{proof}
For (1), let $\langle y_1,y_2 \rangle$ be a radial departure of $f_1$.  Since $f_1$ and $f_1 \circ f_2$ have the same radial departures, there is a radial departure $\langle x_1,x_2 \rangle$ of $f_1 \circ f_2$ with $f_1 \circ f_2(x_1) = f_1(y_1)$ and $f_1 \circ f_2(x_2) = f_1(y_2)$.  By Proposition~\ref{prop:comp dep}, we have that $\langle f_2(x_1),f_2(x_2) \rangle$ is a radial departure of $f_1$, and, by Lemma~\ref{lem:dep values unique}(1), we deduce that $f_2(x_1) = y_1$ and $f_2(x_2) = y_2$.  Then by Proposition~\ref{prop:comp dep} again, we conclude that $\langle x_1,x_2 \rangle$ is a positive radial departure of $f_2$.

For (2), let $\langle x_1,x_2 \rangle$ be a negative radial departure of $f_2$, and let $\langle y_1,y_2 \rangle$ be any radial departure of $f_1$.  By part (1), there exists a positive radial departure $\langle x_1',x_2' \rangle$ of $f_2$ such that $f_2(x_1') = y_1$ and $f_2(x_2') = y_2$.  By Proposition~\ref{prop:alt dep nested}, we have either
\[ x_1 < x_1' < 0 < x_2' < x_2 \quad \textrm{or} \quad x_1' < x_1 < 0 < x_2 < x_2' \]
and correspondingly either
\[ f_2(x_2) < f_2(x_1') < 0 < f_2(x_2') < f_2(x_1) \quad \textrm{or} \quad f_2(x_1') < f_2(x_2) < 0 < f_2(x_1) < f_2(x_2') \]
which proves (2).
\end{proof}

\section{Contour twins}
\label{sec:no twins}

This section contains the main result of this paper, Theorem~\ref{thm:no twins}, which gives a partial affirmative answer to the question of Nadler and Quinn (more precisely to the form of this question we state as Question~\ref{ques:0 accessible} in Section~\ref{sec:prelim}), for inverse systems $\left \langle [-1,1],f_n \right \rangle$ satisfying two technical conditions.  The first condition is that the radial contour factors of the bonding maps are ``stable'', by which we mean that the radial contour factor of $f_n \circ f_{n+1}$ equals the radial contour factor of $f_n$, for each $n$.  The second condition states that the left and right ``halves'' of each bonding map $f_n$ must be sufficiently ``misaligned'', in the sense of the following Definition.

\begin{defn}
Let $f \colon [-1,1] \to [-1,1]$ be a map with $f(0) = 0$.  We say $f$ has \emph{no contour twins} if:
\begin{enumerate}
\item $0$ is neither a local minimum nor a local maximum of $f$; and
\item If $\alpha$ is a right contour point of $f$ and $\alpha'$ is a left contour point of $f$ with $0 < \alpha < 1$ and $-1 < \alpha' < 0$, then $f(\alpha) \neq f(\alpha')$.
\end{enumerate}
\end{defn}

\begin{lem}
\label{lem:sell}
Let $f_1,f_2,f_3 \colon [-1,1] \to [-1,1]$ be piecewise-linear maps with $f_1(0) = f_2(0) = f_3(0) = 0$, and such that:
\begin{enumerate}[label=(\roman{*})]
\item $t_{f_1} = t_{f_1 \circ f_2}$;
\item $t_{f_2} = t_{f_2 \circ f_3}$; and
\item each of $f_1$ and $f_2$ has no contour twins.
\end{enumerate}
Then there exists $\tilde{s} \colon [-1,1] \to [-1,1]$ with $\tilde{s}(0) = 0$ such that:
\begin{enumerate}
\item $t_{f_1} \circ \tilde{s} = f_1 \circ f_2$; and
\item $\tilde{s} \circ t_{f_3}$ has no negative radial departures.
\end{enumerate}
\end{lem}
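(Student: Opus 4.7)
My plan is to take $\tilde s$ to be a radial meandering factor of $f_1 \circ f_2$. Applying Proposition~\ref{prop:meandering factor} separately to $(f_1 \circ f_2)|_{[0,1]}$ and to $(f_1 \circ f_2)|_{[-1,0]} \circ \mathsf{r}$ and gluing the resulting one-sided meandering factors, I obtain a sign-preserving piecewise-linear map $\tilde s \colon [-1,1] \to [-1,1]$ with $\tilde s(0)=0$ satisfying $f_1 \circ f_2 = t_{f_1 \circ f_2} \circ \tilde s$; the remark following Definition~\ref{defn:rad contour factor} then gives that $\tilde s$ has no negative radial departures. Hypothesis~(i) identifies $t_{f_1\circ f_2}$ with $t_{f_1}$, yielding conclusion~(1) immediately.

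For conclusion~(2), I argue by contradiction. Suppose $\tilde s \circ t_{f_3}$ has a negative radial departure $\langle x_1,x_2\rangle$. Proposition~\ref{prop:comp dep} presents two cases, and the one in which $\langle x_1,x_2\rangle$ is a positive radial departure of $t_{f_3}$ is impossible since it would make $\langle t_{f_3}(x_1),t_{f_3}(x_2)\rangle$ a negative radial departure of $\tilde s$. Thus $\langle x_1,x_2\rangle$ is a negative radial departure of $t_{f_3}$, and writing $y_1 = t_{f_3}(x_1) > 0$ and $y_2 = t_{f_3}(x_2) < 0$, the pair $\langle y_2, y_1 \rangle$ is a positive radial departure of $\tilde s$, giving $\tilde s((y_2,y_1)) = (\tilde s(y_2), \tilde s(y_1))$ with $\tilde s(y_2)<0<\tilde s(y_1)$. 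Lemma~\ref{lem:f tf same dep} then yields a negative radial departure of $f_3$ with image pair $\{y_1,y_2\}$. Invoking hypothesis~(ii) via Corollary~\ref{cor:same contour same dep} and Proposition~\ref{prop:comp same dep}(2), every radial departure $\langle w_1,w_2\rangle$ of $f_2$ satisfies
\[ y_2 < w_1 < 0 < w_2 < y_1 \qquad \text{or} \qquad w_1 < y_2 < 0 < y_1 < w_2. \]

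To finish I plan to invoke hypothesis~(iii) together with Lemma~\ref{lem:s match} to produce $y^\ast \in (y_2, y_1)$ with $\tilde s(y^\ast) \notin (\tilde s(y_2), \tilde s(y_1))$, contradicting the positive radial departure of $\tilde s$ at $\langle y_2,y_1\rangle$. The idea is that radial departures of $f_1 \circ f_2$---which, by hypothesis~(i) and Corollary~\ref{cor:same contour same dep}, coincide with those of $f_1$---occur at radial-departure locations of $f_2$ (Proposition~\ref{prop:comp dep}) and, by Lemma~\ref{lem:s match}, force $\tilde s$ to attain prescribed extreme values at those locations. The no-contour-twins hypothesis on $f_2$ prevents contour values from the two halves of $f_2$ from matching, and the no-contour-twins hypothesis on $f_1$ enforces a corresponding rigidity of $t_{f_1}$, so that the lifted extremum $\tilde s(y^\ast)$ is forced to overshoot $\tilde s(y_1)$ or undershoot $\tilde s(y_2)$.

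The main obstacle is producing $y^\ast$ in this last step: one must perform a careful case analysis on the dichotomy displayed above (radial departures of $f_2$ nested within $(y_2, y_1)$ versus containing $(y_2, y_1)$), trace how contour points of $f_2$ propagate through $t_{f_1}$ to extrema of $\tilde s$ via the explicit construction of $\tilde s$ from Proposition~\ref{prop:meandering factor}, and use both no-contour-twins hypotheses to rule out the balanced configuration that would allow $\tilde s$ to remain confined to the strip $(\tilde s(y_2), \tilde s(y_1))$ on $[y_2, y_1]$.
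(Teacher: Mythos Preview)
Your setup through the displayed dichotomy is correct and parallels the opening moves of the paper's argument. But the final step---producing the point $y^\ast$---is not a detail to be filled in; it is the entire substance of the lemma, and your sketch does not indicate a workable route. You also waver between taking $\tilde s$ to be an arbitrary radial meandering factor and taking it to be the specific one built in Proposition~\ref{prop:meandering factor}; either way, $\tilde s = s_{12}$ records only the contour structure of $f_1\circ f_2$, and the contour points of $f_2$ are not directly visible in it.

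The paper makes a different and deliberate choice of $\tilde s$: it sets
\[
\tilde s(x)=\begin{cases} s_{12}(x) & x\le 0,\\ s_1\circ f_2(x) & x>0,\end{cases}
\]
where $s_1$ is a radial meandering factor of $f_1$. This asymmetry is the key idea. On the right half one has $\tilde s = s_1\circ f_2$, so departures and contour points of $\tilde s{\restriction}_{[0,1]}$ are those of $f_2{\restriction}_{[0,1]}$ (up to the sign-preserving $s_1$), and the no-contour-twins hypothesis on $f_2$ can be applied \emph{directly} to $s_1\circ f_2$. The paper then exploits the fact that $t_1\circ s_{12}=t_1\circ s_1\circ f_2$ to compare the two halves, running a two-part argument with five nested claims that eventually forces a contradiction in the contour structure of $t_{f_1}$, where the no-contour-twins hypothesis on $f_1$ is used.

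With your symmetric choice $\tilde s=s_{12}$, the link to $f_2$'s contours is only indirect: Lemma~\ref{lem:s match} pins down $s_{12}$ at radial departures of $f_1\circ f_2$, but those are radial departures of $f_2$, not contour points of $f_2$, and the passage from one to the other through $t_{f_1}$ is exactly the difficulty. Your plan to ``trace how contour points of $f_2$ propagate through $t_{f_1}$ to extrema of $\tilde s$'' is precisely what the paper's asymmetric construction is engineered to sidestep. It is not clear your approach can be completed; if it can, it would require an argument at least as intricate as the paper's five claims, none of which you have supplied.
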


\begin{proof}
For $k = 1,2,3$, let $t_k = t_{f_k}$.  Let $s_1$ be a radial meandering factor of $f_1$ (so that $f_1 = t_1 \circ s_1$) and let $s_{12}$ be a radial meandering factor of $f_1 \circ f_2$ (so that $f_1 \circ f_2 = t_1 \circ s_{12}$).  Define $\tilde{s} \colon [-1,1] \to [-1,1]$ by:
\[ \tilde{s}(x) = \begin{cases}
s_{12}(x) & \textrm{if } x \leq 0 \\
s_1 \circ f_2(x) & \textrm{if } x > 0 .
\end{cases} \]

See Example~\ref{ex:minc demo} below for a particular example with this map $\tilde{s}$ shown.

Note that $\tilde{s}$ has no negative radial departures, so by Proposition~\ref{prop:comp dep} we need only prove that for any negative radial departure $\langle w_1,w_2 \rangle$ of $t_3$, $\langle t_3(w_2),t_3(w_1) \rangle$ is not a positive radial departure of $\tilde{s}$.

Let $\langle w_1,w_2 \rangle$ be a negative radial departure of $t_3$, and let $x_1 = t_3(w_2)$ and $x_2 = t_3(w_1)$.  Suppose for a contradiction that $\langle x_1,x_2 \rangle$ is a positive radial departure of $\tilde{s}$.  This means
\[ s_{12}((x_1,0]) = (s_{12}(x_1),0] \quad \textrm{and} \quad s_1 \circ f_2([0,x_2)) \subseteq (s_{12}(x_1),s_1 \circ f_2(x_2)) .\]

The remainder of the proof proceeds in two parts.  In the first part, we establish the existence and configuration of a few departures of $s_1 \circ f_2$, using the assumption that $f_2$ has no contour twins, and the observation \ref{properly nested} below.  In the second part, we consider the map $t_1$, and the relationship between $s_1 \circ f_2$ and $s_{12}$, and derive a contradiction.

\medskip
\noindent \textbf{Part 1.}

Recall that, by Proposition~\ref{prop:comp dep}, each radial departure of $s_1 \circ f_2$ is also a radial departure of $f_2$.  Further, it follows from Lemma~\ref{lem:same comp contour} that $f_2 \circ f_3$ and $f_2 \circ t_3$ have the same radial contour factor, so $f_2$ and $f_2 \circ t_3$ have the same radial contour factor and hence the same radial departures by Corollary~\ref{cor:same contour same dep}.  Proposition~\ref{prop:comp same dep}(2) then implies that
\begin{enumerate}[label=($\ast$), ref=($\ast$)]
\item \label{properly nested} If $\langle z_1,z_2 \rangle$ is a radial departure of $s_1 \circ f_2$, then either $z_1 < x_1 < 0 < x_2 < z_2$ or $x_1 < z_1 < 0 < z_2 < x_2$
\end{enumerate}
since any such $\langle z_1,z_2 \rangle$ is a radial departure of $f_2$ by Proposition~\ref{prop:comp dep}.

Since $0$ is not a local minimum of $f_2$ and $s_1$ is sign-preserving, it follows that $0$ is also not a local minimum of $s_1 \circ f_2$.  We claim there exists $x \in (0,x_2)$ such that $s_1 \circ f_2(x) < 0$.  Indeed, if not, then since $0$ is not a local minimum of $s_1 \circ f_2$, for some arbitrarily small $\varepsilon > 0$ we would have that $\langle -\varepsilon,x_2 \rangle$ is a (positive) radial departure of $s_1 \circ f_2$, but this would contradict \ref{properly nested}.

Let $y_0 = \min s_1 \circ f_2([0,x_2])$, and let $x_0 \in (0,x_2)$ be the right departure of $s_1 \circ f_2$ such that $s_1 \circ f_2(x_0) = y_0$.  Also, let $y_2 = s_1 \circ f_2(x_2)$.  Notice that $x_0$ is a contour point of $s_1 \circ f_2$.

Because $t_3([w_1,0]) \supseteq [0,x_2]$, we have that $y_0,y_2 \in s_1 \circ f_2 \circ t_3([w_1,0])$, therefore there exist left departures $\tilde{w}_0,\tilde{w}_2 < 0$ of $s_1 \circ f_2 \circ t_3$ such that $s_1 \circ f_2 \circ t_3(\tilde{w}_0) = y_0$ and $s_1 \circ f_2 \circ t_3(\tilde{w}_2) = y_2$.  Since $s_1 \circ f_2 \circ t_3$ and $s_1 \circ f_2$ have the same radial contour factor (by Lemma~\ref{lem:same comp contour}), there exist corresponding left departures $\tilde{x}_0,\tilde{x}_2 < 0$ of $s_1 \circ f_2$ such that $s_1 \circ f_2(\tilde{x}_0) = y_0$ and $s_1 \circ f_2(\tilde{x}_2) = y_2$.

\begin{claim}
\label{claim:dep order}
$\tilde{x}_0 < \tilde{x}_2$ and $x_1 < \tilde{x}_2$.
\end{claim}

\begin{proof}[Proof of Claim~\ref{claim:dep order}]
\renewcommand{\qedsymbol}{\textsquare (Claim~\ref{claim:dep order})}
Suppose for a contradiction that $\tilde{x}_2 < \tilde{x}_0$.  First note that $\tilde{x}_0$ cannot be a contour point of $s_1 \circ f_2$, because if it were then $\tilde{x}_0$ and $x_0$ would be contour twins of $s_1 \circ f_2$, and hence of $f_2$, contradicting the hypotheses of the Lemma.  Therefore there must be a left departure $x$ of $s_1 \circ f_2$ with $\tilde{x}_2 < x < \tilde{x}_0$ and $s_1 \circ f_2(x) < s_1 \circ f_2(\tilde{x}_0)$.  But then $\langle x,x_2 \rangle$ is a (positive) radial departure of $s_1 \circ f_2$, contradicting \ref{properly nested}.  This proves that $\tilde{x}_0 < \tilde{x}_2$.

We now have that $\langle \tilde{x}_2,x_0 \rangle$ is a negative radial departure of $s_1 \circ f_2$.  Therefore by \ref{properly nested}, we must have $x_1 < \tilde{x}_2$.
\end{proof}

See Figure~\ref{fig:part1} for an illustration summarizing what has been deduced so far in Part 1.

\begin{figure}
\begin{center}
\includegraphics{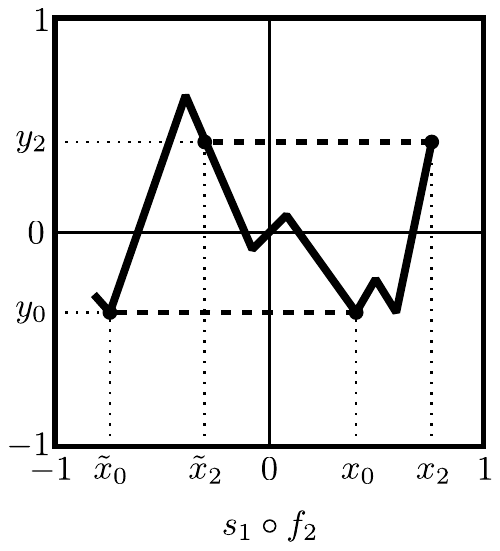}
\end{center}

\caption{A sample illustration of part of the map $s_1 \circ f_2$, with the departures $x_0,x_2,\tilde{x}_0,\tilde{x}_2$ shown.}
\label{fig:part1}
\end{figure}

\begin{claim}
\label{claim:above y0}
For all $x \in [x_1,0]$, $s_1 \circ f_2(x) \geq y_0$.
\end{claim}

\begin{proof}[Proof of Claim~\ref{claim:above y0}]
\renewcommand{\qedsymbol}{\textsquare (Claim~\ref{claim:above y0})}
Suppose for a contradiction that there exists $p_0 \in [x_1,0]$ such that $s_1 \circ f_2(p_0) < y_0$.  This means $x_1 \leq p_0 < \tilde{x}_0$.  We may assume that $p_0$ is a left departure of $s_1 \circ f_2$.  Let $q_3 = s_1 \circ f_2(p_0)$.

Because $t_3([0,w_2]) \supseteq [x_1,0]$, we have that $q_3 \in s_1 \circ f_2 \circ t_3([0,w_2])$, therefore there exists a right departure $\tilde{w}_3$ of $s_1 \circ f_2 \circ t_3$ such that $s_1 \circ f_2 \circ t_3(\tilde{w}_3) = q_3$.  Since $s_1 \circ f_2 \circ t_3$ and $s_1 \circ f_2$ have the same radial contour factor, there exists a corresponding right departure $\tilde{q}_0$ of $s_1 \circ f_2$ such that $s_1 \circ f_2(\tilde{q}_0) = q_3$.  Clearly $\tilde{q}_0 > x_2$.

Let $q_1 = \max \{s_1 \circ f_2(x): x \in [p_0,\tilde{x}_2]\}$, let $q_2 = \max \{s_1 \circ f_2(x): x \in [x_2,\tilde{q}_0]\}$, and let $p_1 \in [p_0,\tilde{x}_2]$ be the left departure of $s_1 \circ f_2$ and let $p_2 \in [x_2,\tilde{q}_0]$ be the right departure of $s_1 \circ f_2$ such that $s_1 \circ f_2(p_1) = q_1$ and $s_1 \circ f_2(p_2) = q_2$.

Observe that both $p_1$ and $p_2$ are contour points of $s_1 \circ f_2$, hence $q_1 \neq q_2$ according to the hypotheses of the Lemma.  But if $q_1 > q_2$ then $\langle p_1,\tilde{q}_0 \rangle$ is a radial departure of $s_1 \circ f_2$ contradicting \ref{properly nested}, and if $q_2 > q_1$ then $\langle p_0,p_2 \rangle$ is a radial departure of $s_1 \circ f_2$ contradicting \ref{properly nested}.
\end{proof}

\medskip
\noindent \textbf{Part 2.}

Suppose $f_1$ has $n$ right contour points (not including $0$), which means the non-zero right contour points of $t_1$ are $\frac{1}{n},\frac{2}{n},\ldots,1$.

\begin{claim}
\label{claim:y2 past contour pt}
$y_2 > \frac{1}{n}$.
\end{claim}

\begin{proof}[Proof of Claim~\ref{claim:y2 past contour pt}]
\renewcommand{\qedsymbol}{\textsquare (Claim~\ref{claim:y2 past contour pt})}
Suppose $f_1$ has $m$ left contour points (not including $0$), which means the non-zero left contour points of $t_1$ are $-1,\ldots,\frac{-2}{m},\frac{-1}{m}$.  Since $0$ is not a local minimum or a local maximum for $f_1$, and hence for $t_1$, it follows that $t_1$ is one-to-one on $\left[ \frac{-1}{m},\frac{1}{n} \right]$.  Since $s_{12}(0) = s_1 \circ f_2(0) = 0$, and $t_1 \circ s_{12} = t_1 \circ s_1 \circ f_2$, it must be the case that $s_{12}(x) = s_1 \circ f_2(x)$ for all $x > 0$ for which $s_1 \circ f_2([0,x]) \subseteq \left( \frac{-1}{m},\frac{1}{n} \right)$.

Suppose for a contradiction that $y_2 \leq \frac{1}{n}$.  Now $s_{12}(x) \geq 0 > \frac{-1}{m}$ for all $x \geq 0$, and $s_1 \circ f_2(x) < y_2 \leq \frac{1}{n}$ for all $x \in [0,x_0] \subset [0,x_2)$.  Together, these imply that $s_1 \circ f_2(x) = s_{12}(x)$ for all $x \in [0,x_0]$.  But $s_1 \circ f_2(x_0) = y_0 < 0$, while $s_{12}(x) \geq 0$, a contradiction.
\end{proof}

Let $i \in \{2,\ldots,n\}$ be such that $\frac{i-1}{n} < y_2 \leq \frac{i}{n}$.  For simplicity, we assume that $t_1 \left( \frac{i}{n} \right) < 0$, which means $t_1 \left( \frac{i-1}{n} \right) > 0$.  The case where $t_1 \left( \frac{i}{n} \right) > 0$ can be argued similarly.

\begin{claim}
\label{claim:y3}
There exists a left departure $y_3 \in (s_{12}(x_1),0]$ of $t_1$ such that $t_1(y_3) = t_1 \left( \frac{i-1}{n} \right)$.
\end{claim}

See Figure~\ref{fig:y3} for an illustration of the contour points $\frac{i-1}{n}$ and $\frac{i}{n}$ of $t_1$ together with $y_3$.

\begin{figure}
\begin{center}
\includegraphics{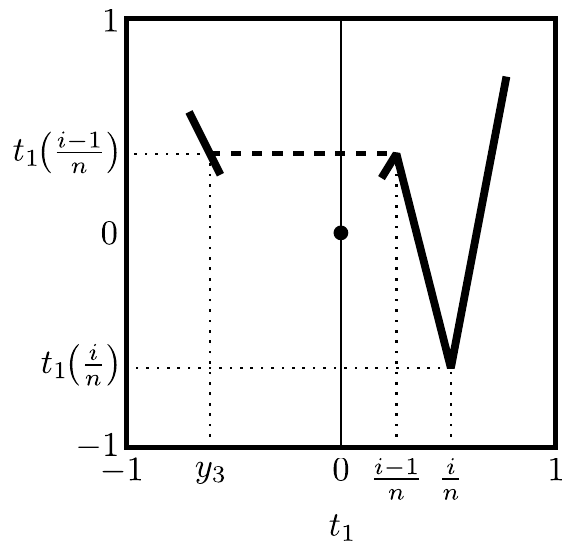}
\end{center}

\caption{A sample illustration of part of the map $t_1$, with the contour points $\frac{i-1}{n}$, $\frac{i}{n}$, and the point $y_3$ shown.}
\label{fig:y3}
\end{figure}

\begin{proof}[Proof of Claim~\ref{claim:y3}]
\renewcommand{\qedsymbol}{\textsquare (Claim~\ref{claim:y3})}
It suffices to prove that $t_1 \left( \tfrac{i-1}{n} \right) \in t_1((s_{12}(x_1),0])$.  Observe that $\frac{i-1}{n} \in [0,y_2) \subseteq s_1 \circ f_2((\tilde{x}_2,0])$, and so
\begin{align*}
t_1 \left( \tfrac{i-1}{n} \right) &\in t_1 \circ s_1 \circ f_2((\tilde{x}_2,0]) \\
&= t_1 \circ s_{12}((\tilde{x}_2,0]) \quad \textrm{since $t_1 \circ s_1 \circ f_2 = t_1 \circ s_{12}$} \\
&\subseteq t_1 \circ s_{12}((x_1,0]) \quad \textrm{by Claim~\ref{claim:dep order}} \\
&= t_1((s_{12}(x_1),0]) .  
\end{align*}
\end{proof}

Note that
\begin{enumerate}[label=($\dagger$), ref=($\dagger$)]
\item \label{y3 range} $t_1(y) \leq t_1 \left( \frac{i-1}{n} \right)$ for each $y \in \left[ y_3, \frac{i}{n} \right]$.
\end{enumerate}

\begin{claim}
\label{claim:y4}
There exists a left departure $y_4 \in (y_3,0]$ of $t_1$ such that $t_1(y_4) = t_1 \left( \frac{i}{n} \right)$.  Further, $i < n$.
\end{claim}

\begin{proof}[Proof of Claim~\ref{claim:y4}]
\renewcommand{\qedsymbol}{\textsquare (Claim~\ref{claim:y4})}
We need only argue that $t_1 \left( \frac{i}{n} \right) \in t_1([y_3,0])$, and that $i < n$.  We consider two cases below, depending on whether $y_3 \leq y_0$ or $y_0 < y_3$.  See Figure~\ref{fig:y4} for an illustration of the points constructed in the arguments below.

\begin{figure}
\begin{center}
\includegraphics{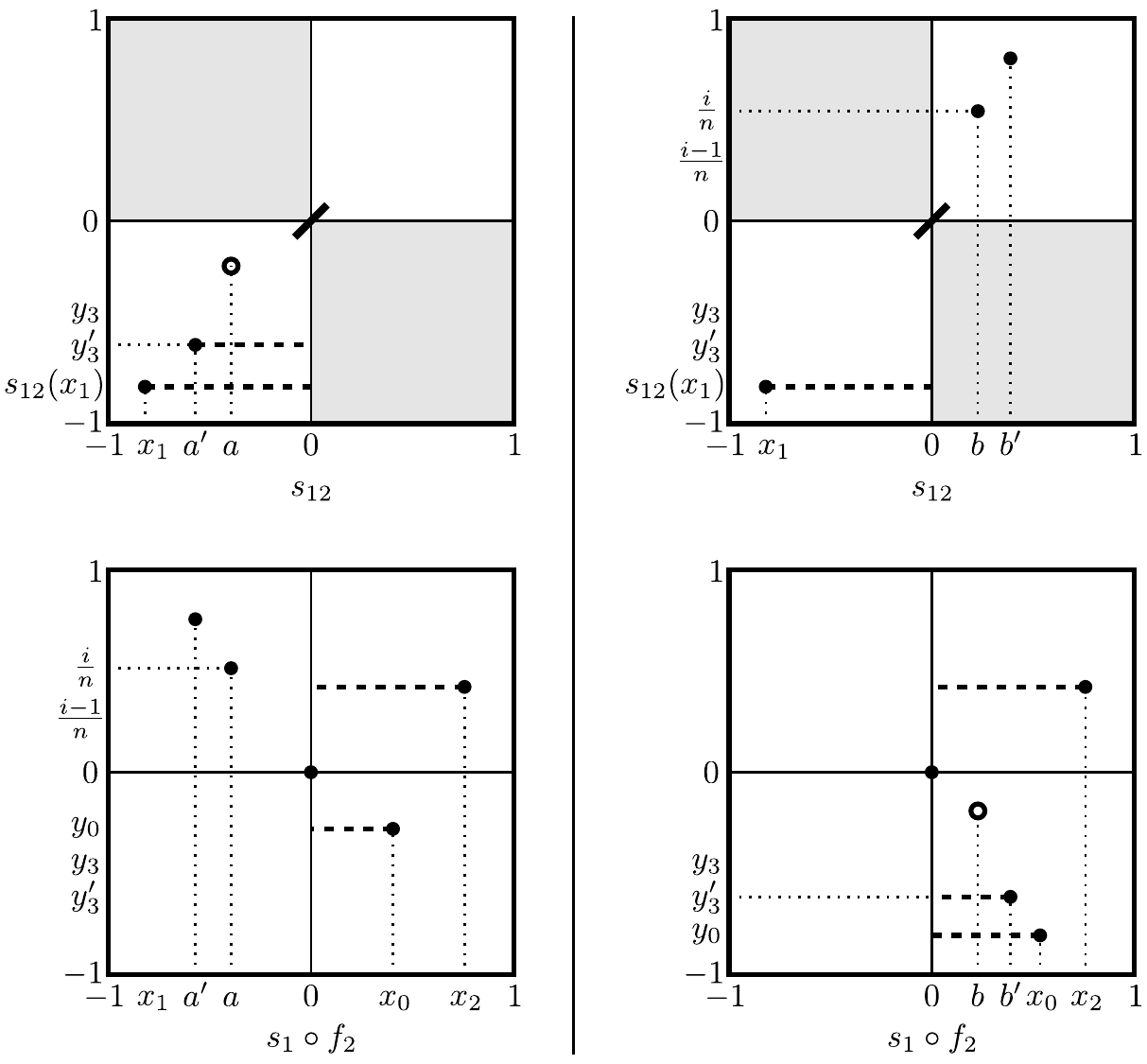}
\end{center}

\caption{A sample illustration of parts of the maps $s_{12}$ and $s_1 \circ f_2$, for Case~1 on the left and Case~2 on the right, in the proof of Claim~\ref{claim:y4}.  In each case, the hollow point witnesses that $t_1 \left( \frac{i}{n} \right) \in t_1([y_3,0])$.}
\label{fig:y4}
\end{figure}

Let $y_3'$ be such that $s_{12}(x_1) < y_3' < y_3$ and $t_1(y) > 0$ for each $y \in [y_3',y_3]$.  Since $t_1$ has no contour twins, $y_3$ is not a contour point of $t_1$, so we may assume that $y_3'$ is a left departure of $t_1$, so that $t_1(y_3') > t_1 \left( \frac{i-1}{n} \right)$.

\medskip
\textbf{Case~1:} Suppose $y_3 \leq y_0$.

Let $a'$ be a left departure of $s_{12}$ such that $s_{12}(a') = y_3'$, so that $x_1 < a'$.  Since $s_1 \circ f_2(a') \geq y_0 \geq y_3$ (by Claim~\ref{claim:above y0}) and $t_1 \circ s_1 \circ f_2(a') = t_1 \circ s_{12}(a') = t_1(y_3') > t_1 \left( \frac{i-1}{n} \right)$, it follows from \ref{y3 range} that $i < n$ and $s_1 \circ f_2(a') > \frac{i}{n}$.  Thus there exists $a \in (a',0]$ such that $s_1 \circ f_2(a) = \frac{i}{n}$.  Observe that $s_{12}(a)$ cannot be in $[y_3',y_3]$ because $t_1 \circ s_{12}(a) = t_1 \circ s_1 \circ f_2(a) = t_1 \left( \frac{i}{n} \right) < 0$.  Therefore $s_{12}(a) \in (y_3,0]$, hence $t_1 \left( \frac{i}{n} \right) = t_1 \circ s_{12}(a) \in t_1((y_3,0])$.

\medskip
\textbf{Case~2:} Suppose $y_0 < y_3$.

Here we may assume that $y_0 < y_3'$ as well.  Let $b'$ be a right departure of $s_1 \circ f_2$ such that $s_1 \circ f_2(b') = y_3'$, so that $b' < x_0$.  Then $t_1 \circ s_{12}(b') = t_1 \circ s_1 \circ f_2(b') = t_1(y_3') > t_1 \left( \frac{i-1}{n} \right)$.  It follows from \ref{y3 range} that $i < n$ and $s_{12}(b') > \frac{i}{n}$.  Thus there exists $b \in [0,b')$ such that $s_{12}(b) = \frac{i}{n}$.  Observe that $s_1 \circ f_2(b)$ cannot be in $[y_3',y_3]$ because $t_1 \circ s_1 \circ f_2(b) = t_1 \circ s_{12}(b) = t_1 \left( \frac{i}{n} \right) < 0$.  Also, $s_1 \circ f_2(b)$ cannot be greater than $0$, because $s_1 \circ f_2(b) < y_2 \leq \frac{i}{n}$.  Therefore $s_1 \circ f_2(b) \in (y_3,0]$, hence $t_1 \left( \frac{i}{n} \right) = t_1 \circ s_1 \circ f_2(b) \in t_1((y_3,0])$.
\end{proof}

Note that
\begin{enumerate}[label=($\dagger\dagger$), ref=($\dagger\dagger$)]
\item \label{y4 range} $t_1(y) \geq t_1 \left( \frac{i}{n} \right)$ for each $y \in \left[ y_4, \frac{i+1}{n} \right]$.
\end{enumerate}

The next Claim immediately leads to a contradiction with \ref{y3 range}, and as such will complete the proof of Lemma~\ref{lem:sell}.  The proof is very similar to the proof of Claim~\ref{claim:y4}, but we include it for completeness.

\begin{claim}
\label{claim:y5}
There exists $y_5 \in (y_4,0]$ such that $t_1(y_5) = t_1 \left( \frac{i+1}{n} \right)$.
\end{claim}

\begin{proof}[Proof of Claim~\ref{claim:y5}]
\renewcommand{\qedsymbol}{\textsquare (Claim~\ref{claim:y5})}
To argue that $t_1 \left( \frac{i+1}{n} \right) \in t_1((y_4,0])$, we consider two cases below, depending on whether $y_4 \leq y_0$ or $y_0 < y_4$.

Let $y_4'$ be such that $y_3 < y_4' < y_4$ and $t_1(y) < 0$ for each $y \in [y_4',y_4]$.  Since $t_1$ has no contour twins, $y_4$ is not a contour point of $t_1$, so we may assume that $y_4'$ is a left departure of $t_1$, so that $t_1(y_4') < t_1 \left( \frac{i}{n} \right)$.

\medskip
\textbf{Case~1:} Suppose $y_4 \leq y_0$.

Let $a'$ be a left departure of $s_{12}$ such that $s_{12}(a') = y_4'$, so that $x_1 < a'$.  Since $s_1 \circ f_2(a') \geq y_0 \geq y_4$ (by Claim~\ref{claim:above y0}) and $t_1 \circ s_1 \circ f_2(a') = t_1 \circ s_{12}(a') = t_1(y_4') < t_1 \left( \frac{i}{n} \right)$, it follows from \ref{y4 range} that $i+1 < n$ and $s_1 \circ f_2(a') > \frac{i+1}{n}$.  Thus there exists $a \in (a',0]$ such that $s_1 \circ f_2(a) = \frac{i+1}{n}$.  Observe that $s_{12}(a)$ cannot be in $[y_4',y_4]$ because $t_1 \circ s_{12}(a) = t_1 \circ s_1 \circ f_2(a) = t_1 \left( \frac{i+1}{n} \right) > 0$.  Therefore $s_{12}(a) \in (y_4,0]$, hence $t_1 \left( \frac{i+1}{n} \right) = t_1 \circ s_{12}(a) \in t_1((y_4,0])$.

\medskip
\textbf{Case~2:} Suppose $y_0 < y_4$.

Here we may assume that $y_0 < y_4'$ as well.  Let $b'$ be a right departure of $s_1 \circ f_2$ such that $s_1 \circ f_2(b') = y_4'$, so that $b' < x_0$.  Then $t_1 \circ s_{12}(b') = t_1 \circ s_1 \circ f_2(b') = t_1(y_4') > t_1 \left( \frac{i}{n} \right)$.  It follows from \ref{y4 range} that $i+1 < n$ and $s_{12}(b') > \frac{i+1}{n}$.  Thus there exists $b \in [0,b')$ such that $s_{12}(b) = \frac{i+1}{n}$.  Observe that $s_1 \circ f_2(b)$ cannot be in $[y_4',y_4]$ because $t_1 \circ s_1 \circ f_2(b) = t_1 \circ s_{12}(b) = t_1 \left( \frac{i+1}{n} \right) > 0$.  Also, $s_1 \circ f_2(b)$ cannot be greater than $0$, because $s_1 \circ f_2(b) < y_2 \leq \frac{i}{n} < \frac{i+1}{n}$.  Therefore $s_1 \circ f_2(b) \in (y_4,0]$, hence $t_1 \left( \frac{i+1}{n} \right) = t_1 \circ s_1 \circ f_2(b) \in t_1((y_4,0])$.
\end{proof}

We now have a contradiction with \ref{y3 range}, since $y_5 \in (y_4,0] \subset \left[ y_3,\frac{i}{n} \right]$ and $t_1(y_5) = t_1 \left( \frac{i+1}{n} \right) > t_1 \left( \frac{i-1}{n} \right)$.  This completes the proof of Lemma~\ref{lem:sell}.
\end{proof}

\begin{thm}
\label{thm:no twins}
Let $f_n \colon [-1,1] \to [-1,1]$, $n = 1,2,\ldots$, be piecewise-linear maps with $f_n(0) = 0$ for each $n$.  Suppose that for each $n$,
\begin{enumerate}
\item $f_n$ and $f_n \circ f_{n+1}$ have the same radial contour factor; and
\item $f_n$ has no contour twins.
\end{enumerate}
Then there exists an embedding of $X = \varprojlim \left \langle [-1,1], f_n \right \rangle$ into $\mathbb{R}^2$ for which the point $\langle 0,0,\ldots \rangle \in X$ is accessible.
\end{thm}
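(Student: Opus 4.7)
The plan is to use Lemma~\ref{lem:sell} to produce an alternative inverse limit representation of $X$ whose bonding maps each have no negative radial departures, and then to invoke Proposition~\ref{prop:embed 0 accessible}. The essential analytic content is already packaged into Lemma~\ref{lem:sell}; what remains is a bookkeeping argument with inverse limits. Throughout I write $h_n = t_{f_n}$ for the radial contour factor of $f_n$, and note that $h_n(0) = 0$.

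First, for each $n \geq 1$, I apply Lemma~\ref{lem:sell} to the triple $(f_{2n-1}, f_{2n}, f_{2n+1})$. Hypotheses (i) and (ii) of that lemma follow from assumption~(1) of the theorem (with indices $2n-1$ and $2n$ respectively), and hypothesis (iii) from assumption~(2) applied to $f_{2n-1}$ and $f_{2n}$. This produces a piecewise-linear map $\tilde{s}_{2n-1} \colon [-1,1] \to [-1,1]$ with $\tilde{s}_{2n-1}(0) = 0$ satisfying $h_{2n-1} \circ \tilde{s}_{2n-1} = f_{2n-1} \circ f_{2n}$, and such that $\tilde{s}_{2n-1} \circ h_{2n+1}$ has no negative radial departures.

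Next, I consider the refined inverse system
\[ [-1,1] \xleftarrow{h_1} [-1,1] \xleftarrow{\tilde{s}_1} [-1,1] \xleftarrow{h_3} [-1,1] \xleftarrow{\tilde{s}_3} [-1,1] \xleftarrow{h_5} \cdots . \]
Grouping its bonding maps in consecutive pairs $h_{2n-1} \circ \tilde{s}_{2n-1} = f_{2n-1} \circ f_{2n}$ and invoking the ``composing bonding maps'' property shows that this system has the same inverse limit as $\langle [-1,1], f_{2n-1} \circ f_{2n} \rangle_n$, which in turn equals $X$. Regrouping the refined bonding maps in the opposite pairing scheme, so that $h_1$ is kept alone and then $\tilde{s}_{2n-1}$ is paired with $h_{2n+1}$ for each $n \geq 1$, gives (again by ``composing bonding maps'') the inverse system
\[ [-1,1] \xleftarrow{h_1} [-1,1] \xleftarrow{\tilde{s}_1 \circ h_3} [-1,1] \xleftarrow{\tilde{s}_3 \circ h_5} [-1,1] \xleftarrow{\tilde{s}_5 \circ h_7} \cdots , \]
whose inverse limit is also $X$. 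Since every map involved sends $0$ to $0$, the point $\langle 0,0,\ldots \rangle \in X$ corresponds to the zero sequence in each of these inverse limit representations.

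Finally, I drop the initial coordinate to obtain an inverse system $\langle [-1,1], g_n \rangle$ with $g_n = \tilde{s}_{2n-1} \circ h_{2n+1}$, each $g_n$ fixing $0$ and having no negative radial departures. Proposition~\ref{prop:embed 0 accessible} then delivers an embedding of this inverse limit, which is homeomorphic to $X$ via a homeomorphism carrying $\langle 0,0,\ldots \rangle$ to $\langle 0,0,\ldots \rangle$, into $\mathbb{R}^2$ for which $\langle 0,0,\ldots \rangle$ is accessible. I do not anticipate a substantial obstacle; the only care required is in tracking the correspondence of the point $\langle 0,0,\ldots \rangle$ through the telescoping and coordinate-dropping steps.
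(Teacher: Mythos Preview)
Your proposal is correct and follows essentially the same route as the paper: apply Lemma~\ref{lem:sell} to each triple $(f_{2n-1},f_{2n},f_{2n+1})$, telescope the resulting factorizations $f_{2n-1}\circ f_{2n}=h_{2n-1}\circ\tilde{s}_{2n-1}$ to regroup the bonding maps as $\tilde{s}_{2n-1}\circ h_{2n+1}$, drop the first coordinate, and invoke Proposition~\ref{prop:embed 0 accessible}. The only minor discrepancy is that Lemma~\ref{lem:sell} as stated does not assert $\tilde{s}$ is piecewise-linear (though its proof does yield this), but Proposition~\ref{prop:embed 0 accessible} does not require piecewise-linearity anyway.
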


\begin{proof}
For each odd $n \geq 1$, let $t_n$ be the radial contour factor of $f_n$, and apply Lemma~\ref{lem:sell} to obtain a map $\tilde{s}_n \colon [-1,1] \to [-1,1]$ with $\tilde{s}_n(0) = 0$ such that $t_n \circ \tilde{s}_n = f_n \circ f_{n+1}$ and $\tilde{s}_n \circ t_{n+2}$ has no negative radial departures.  Then by Proposition~\ref{prop:embed 0 accessible} there exists an embedding of $\displaystyle \varprojlim_{n \textrm{ odd}} \left \langle [-1,1], \tilde{s}_n \circ t_{n+2} \right \rangle$ into $\mathbb{R}^2$ for which the point $\langle 0,0,\ldots \rangle$ is accessible.

Also, by standard properties of inverse limits (see Section~\ref{sec:prelim}),
\begin{align*}
X = \varprojlim \left \langle [-1,1],f_n \right \rangle &\approx \varprojlim \left \langle [-1,1],f_n \circ f_{n+1} \right \rangle_{n \textrm{ odd}} \quad \textrm{by composing bonding maps} \\
&= \varprojlim \left \langle [-1,1],t_n \circ \tilde{s}_n \right \rangle_{n \textrm{ odd}} \\
&\approx \varprojlim \left \langle [-1,1], t_1, [-1,1], \tilde{s}_1, [-1,1], t_3, [-1,1], \tilde{s}_3, \ldots \right \rangle \\ & \hspace{1.5in} \textrm{by (un)composing bonding maps} \\
&\approx \varprojlim \left \langle [-1,1], \tilde{s}_1, [-1,1], t_3, [-1,1], \tilde{s}_3, [-1,1], t_5, \ldots \right \rangle \\ & \hspace{1.5in} \textrm{by dropping first coordinate} \\
&\approx \varprojlim \left \langle [-1,1],\tilde{s}_n \circ t_{n+2} \right \rangle_{n \textrm{ odd}} \quad \textrm{by composing bonding maps.}
\end{align*}
In fact, an explicit homeomorphism between $X$ and $\displaystyle \varprojlim_{n \textrm{ odd}} \left \langle [-1,1],\tilde{s}_n \circ t_{n+2} \right \rangle$ is given by
\[ h \left( \langle x_n \rangle_{n=1}^\infty \right) = \langle \tilde{s}_{2k-1}(x_{2k+1}) \rangle_{k=1}^\infty .\]  Clearly $h(\langle 0,0,\ldots \rangle) = \langle 0,0,\ldots \rangle$.  Thus there exists an embedding of $X$ into $\mathbb{R}^2$ for which the point $\langle 0,0,\ldots \rangle \in X$ is accessible.
\end{proof}

We present two examples to complement the proof of Lemma~\ref{lem:sell} above.  In the first, we demonstrate the function $\tilde{s}$ constructed in the proof of Lemma~\ref{lem:sell} when each of the maps $f_1,f_2,f_3$ is Minc's map $f_M$.  In the second, we substantiate the need for the assumption that the maps $f_1,f_2,f_3$ have no contour twins, by showing an instance where $f_2$ has contour twins and the map $\tilde{s}$ as constructed in the proof of Lemma~\ref{lem:sell} does not satisfy the conclusion of the Lemma.

\begin{example}
\label{ex:minc demo}
Let $f_1 = f_2 = f_3 = f_M$, where $f_M$ is the map of Minc described in the Introduction (Figure~\ref{fig:minc}), rescaled so as to be a map on $[-1,1]$ instead of $[0,1]$.  It can be seen that $f_M^2$ has the same radial contour factor as $f_M$.  In Figure~\ref{fig:minc demo} we show the maps $f_M$, $f_M^2$, $t_{f_M}$, $s_1$ (such that $f_M = t_{f_M} \circ s_1$), $s_{12}$ (such that $f_M^2 = t_{f_M} \circ s_{12}$), $\tilde{s}$ (as defined in the proof of Lemma~\ref{lem:sell}), and finally $\tilde{s} \circ t_{f_M}$, which has no negative radial departures (as implied by the proof of Lemma~\ref{lem:sell}).

\begin{figure}
\begin{center}
\includegraphics{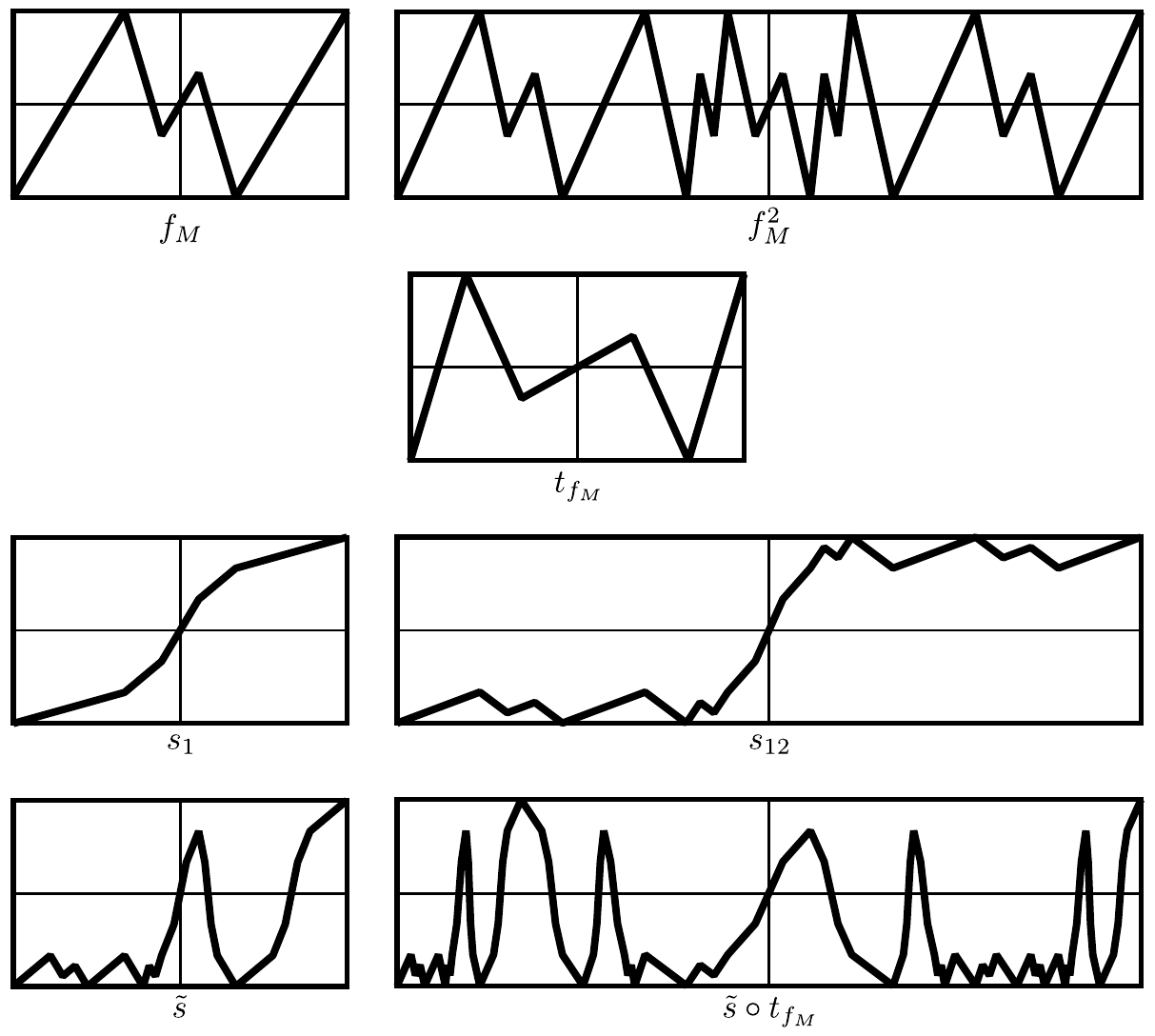}
\end{center}

\caption{The maps $f_M$, $f_M^2$, $t_{f_M}$, $s_1$, $s_{12}$, $\tilde{s}$, and $\tilde{s} \circ t_{f_M}$, for Example~\ref{ex:minc demo}.}
\label{fig:minc demo}
\end{figure}
\end{example}

\begin{example}
\label{ex:twins ex}
In Figure~\ref{fig:twins ex} we present an example of functions $f_1,f_2,f_3$ where $f_2$ has contour twins, and three natural choices of the map $\tilde{s}$ do not satisfy the conclusion of Lemma~\ref{lem:sell}.  We briefly list the key properties for this example below:
\begin{itemize}
\item $f_1 = t_1$, $f_2 = t_2$, $f_3 = t_3$.  Consequently, $s_1 = \mathrm{id}$.
\item $t_{f_1 \circ f_2} = t_1$, and $t_{f_2 \circ f_3} = t_2$.
\item $f_2$ has contour twins: namely, $f(b_3) = f(b_4)$ and $f(b_1) = f(b_5)$.
\item There is a unique radial meandering factor $s_{12} \colon [-1,1] \to [-1,1]$ of $f_1 \circ f_2$, given in red in Figure~\ref{fig:twins ex}.
\item There is a left departure $x_1 \in (b_2,b_3)$ of $s_{12}$ such that $s_{12}(x_1) = -1$.  There is a right departure $x_2 \in (b_4,b_5)$ of $s_{12}$ such that $s_{12}(x_2) = 1$.
\item Let $w_1 \in (a_1,a_2)$ be the left departure of $f_3$ such that $f_3(w_1) = x_2$, and let $w_2 \in (a_3,a_4)$ be the right departure of $f_3$ such that $f_3(w_2) = x_1$.  Then:
\begin{enumerate}[label=(\roman{*})]
\item $\langle w_1,w_2 \rangle$ is a negative radial departure of $s_{12} \circ t_3$;
\item $\langle a_1,w_2 \rangle$ is a negative radial departure of $\tilde{s} \circ t_3$, where
\[ \tilde{s}(x) = \begin{cases}
s_{12}(x) & \textrm{if } x \leq 0 \\
s_1 \circ f_2(x) & \textrm{if } x > 0
\end{cases} \]
as in the proof of Lemma~\ref{lem:sell}; and
\item $\langle w_1,a_4 \rangle$ is a negative radial departure of $\tilde{s} \circ t_3$, where
\[ \tilde{s}(x) = \begin{cases}
s_1 \circ f_2(x) & \textrm{if } x \leq 0 \\
s_{12}(x) & \textrm{if } x > 0 ,
\end{cases} \]
a variant of the map constructed in the proof of Lemma~\ref{lem:sell}.
\end{enumerate}
\end{itemize}
Nevertheless, it is not difficult to construct a different map $\tilde{s}$ for this example satisfying the conclusion of Lemma~\ref{lem:sell}, for example:
\[ \tilde{s}(x) = \begin{cases}
s_1 \circ f_2(x) & \textrm{if } x \in [b_1,b_6] \\
s_{12}(x) & \textrm{otherwise.}
\end{cases} \]

\begin{figure}
\begin{center}
\includegraphics{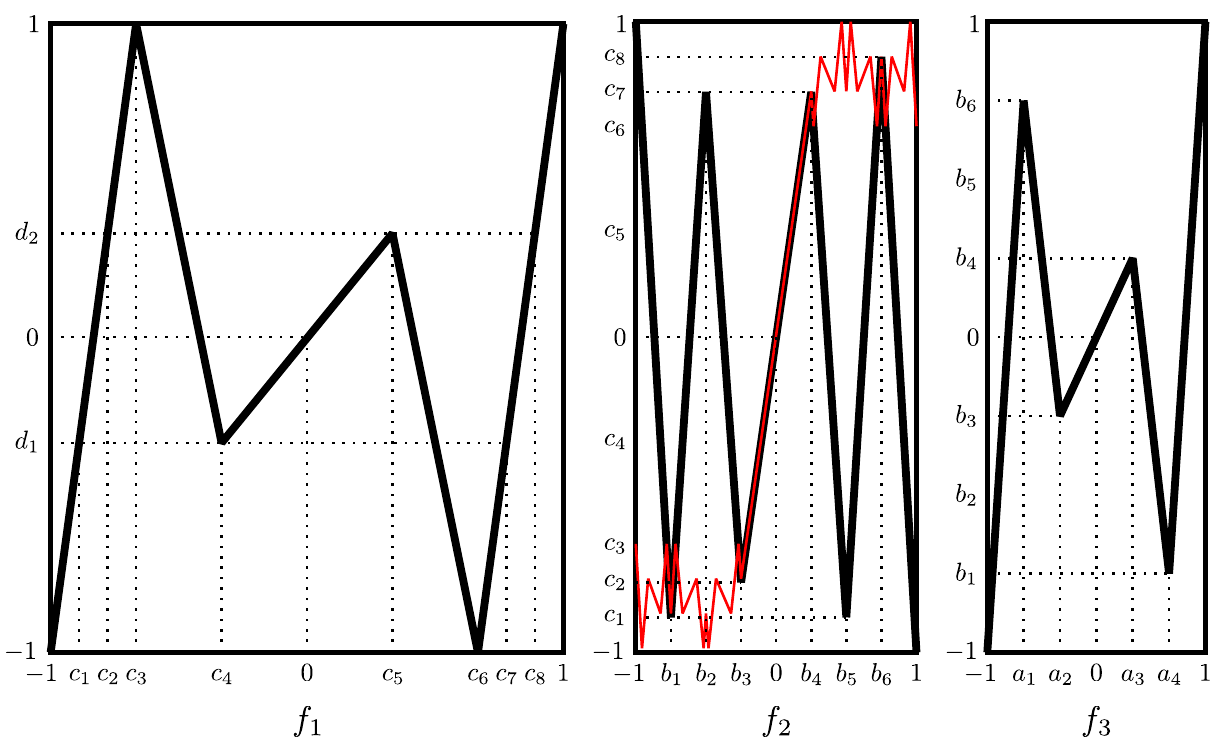}
\end{center}

\caption{The maps $f_1$, $f_2$, $f_3$ for Example~\ref{ex:twins ex}, as well as the map $s_{12}$ (in red).  Here $\{a_1,\ldots,a_4\} = \{\frac{-2}{3},\frac{-1}{3},\frac{1}{3},\frac{2}{3}\}$, $\{b_1,\ldots,b_6\} = \{\frac{-3}{4},\frac{-1}{2},\frac{-1}{4},\frac{1}{4},\frac{1}{2},\frac{3}{4}\}$, $\{c_1,\ldots,c_8\} = \{\frac{-8}{9},\frac{-7}{9},\frac{-2}{3},\frac{-1}{3},\frac{1}{3},\frac{2}{3},\frac{7}{9},\frac{8}{9}\}$, and $\{d_1,d_2\} = \{\frac{-1}{3},\frac{1}{3}\}$.}
\label{fig:twins ex}
\end{figure}
\end{example}

It would be interesting to know whether the assumption that the maps $f_n$ have no contour twins in Lemma~\ref{lem:sell} (and hence in Theorem~\ref{thm:no twins}) can be weakened or removed:

\begin{question}
\label{ques:no no twins}
Let $f_1,f_2,f_3 \colon [-1,1] \to [-1,1]$ be piecewise-linear maps with $f_1(0) = f_2(0) = f_3(0) = 0$, and such that $t_{f_1} = t_{f_1 \circ f_2}$ and $t_{f_2} = t_{f_2 \circ f_3}$.  Does there exist a map $\tilde{s} \colon [-1,1] \to [-1,1]$ with $\tilde{s}(0) = 0$ such that $f_1 \circ f_2 = t_{f_1} \circ \tilde{s}$ and $\tilde{s} \circ t_{f_3}$ has no negative radial departures?
\end{question}

We point out that if the answer to Question~\ref{ques:no no twins} is affirmative, it would imply that for any simplicial inverse system $\langle [-1,1],f_n \rangle$, for any point $x \in X = \varprojlim \left \langle [-1,1],f_n \right \rangle$ there would exist an embedding of $X$ into $\mathbb{R}^2$ for which the point $x \in X$ is accessible (thus giving a positive partial answer to the Nadler-Quinn problem, for the special case of arc-like continua which are inverse limits of simplicial inverse systems of arcs).  This is because for a simplicial system, for any $n$, there are only finitely many possible radial contour factors for the maps $f_n$, $f_n \circ f_{n+1}$, $f_n \circ f_{n+1} \circ f_{n+2}$, $\ldots$, hence by replacing the bonding maps with appropriate compositions we may freely assume without loss of generality that $f_n$ and $f_n \circ f_{n+1}$ have the same radial contour factor for each $n$.

\bibliographystyle{amsplain}
\bibliography{Departures}

\providecommand{\bysame}{\leavevmode\hbox to3em{\hrulefill}\thinspace}
\providecommand{\MR}{\relax\ifhmode\unskip\space\fi MR }
\providecommand{\MRhref}[2]{%
  \href{http://www.ams.org/mathscinet-getitem?mr=#1}{#2}
}
\providecommand{\href}[2]{#2}
\begin{thebibliography}{10}

\bibitem{anderson-choquet1959}
R.~D. Anderson and Gustave Choquet, \emph{A plane continuum no two of whose
  nondegenerate subcontinua are homeomorphic: {A}n application of inverse
  limits}, Proc. Amer. Math. Soc. \textbf{10} (1959), 347--353. \MR{105073}

\bibitem{anusic2021}
Ana Anu\v{s}i\'{c}, \emph{Planar embeddings of {M}inc's continuum and
  generalizations}, Topology Appl. \textbf{292} (2021), Paper No. 107519, 13.
  \MR{4223408}

\bibitem{anusic-bruin-cinc2017}
Ana Anu\v{s}i\'{c}, Henk Bruin, and Jernej \v{C}in\v{c}, \emph{Uncountably many
  planar embeddings of unimodal inverse limit spaces}, Discrete Contin. Dyn.
  Syst. \textbf{37} (2017), no.~5, 2285--2300. \MR{3619063}

\bibitem{anusic-bruin-cinc2020}
\bysame, \emph{Planar embeddings of chainable continua}, Topology Proc.
  \textbf{56} (2020), 263--296. \MR{4061353}

\bibitem{brechner1978}
Beverly Brechner, \emph{On stable homeomorphisms and imbeddings of the pseudo
  arc}, Illinois J. Math. \textbf{22} (1978), no.~4, 630--661. \MR{500860}

\bibitem{brown1960}
Morton Brown, \emph{Some applications of an approximation theorem for inverse
  limits}, Proc. Amer. Math. Soc. \textbf{11} (1960), 478--483. \MR{115157}

\bibitem{debski-tymchatyn1993}
W.~D\polhk{e}bski and E.~D. Tymchatyn, \emph{A note on accessible composants in
  {K}naster continua}, Houston J. Math. \textbf{19} (1993), no.~3, 435--442.
  \MR{1242430}

\bibitem{problems2018}
Logan~C. Hoehn, Piotr Minc, and Murat Tuncali, \emph{Problems in continuum
  theory in memory of {S}am {B}. {N}adler, {J}r.}, Topology Proc. \textbf{52}
  (2018), 281--308, With contributions. \MR{3773586}

\bibitem{problems2002}
Alejandro Illanes, Sergio Mac\'{i}as, and Wayne Lewis (eds.), \emph{Continuum
  theory}, Lecture Notes in Pure and Applied Mathematics, vol. 230, Marcel
  Dekker, Inc., New York, 2002. \MR{2001430}

\bibitem{lewis1981}
Wayne Lewis, \emph{Embeddings of the pseudo-arc in {$E\sp{2}$}}, Pacific J.
  Math. \textbf{93} (1981), no.~1, 115--120. \MR{621602}

\bibitem{lewis1983}
\bysame, \emph{Continuum theory problems}, vol.~8, 1983, pp.~361--394.
  \MR{765091}

\bibitem{mayer1982}
John~C. Mayer, \emph{Embeddings of plane continua and the fixed point
  property}, ProQuest LLC, Ann Arbor, MI, 1982, Thesis (Ph.D.)--University of
  Florida. \MR{2632164}

\bibitem{mayer1983}
\bysame, \emph{Inequivalent embeddings and prime ends}, vol.~8, 1983,
  pp.~99--159. \MR{738473}

\bibitem{mazurkiewicz1929}
S.~Mazurkiewicz, \emph{Un th\'eor\`eme sur l'accessibilit\'e des continus
  ind\'ecomposables}, Fund. Math. \textbf{14} (1929), 271--276.

\bibitem{minc1997}
Piotr Minc, \emph{Embedding of simplicial arcs into the plane}, vol.~22, 1997,
  pp.~305--340. \MR{1718899}

\bibitem{minc-transue1992}
Piotr Minc and W.~R.~R. Transue, \emph{Accessible points of hereditarily
  decomposable chainable continua}, Trans. Amer. Math. Soc. \textbf{332}
  (1992), no.~2, 711--727. \MR{1073777}

\bibitem{nadler1972}
Sam~B. Nadler, Jr., \emph{Some results and problems about embedding certain
  compactifications}, Proceedings of the {U}niversity of {O}klahoma {T}opology
  {C}onference {D}edicated to {R}obert {L}ee {M}oore ({N}orman, {O}kla., 1972),
  Univ. of Oklahoma, Norman, Okla., 1972, pp.~222--233. \MR{0362256}

\bibitem{nadler-quinn1972}
Sam~B. Nadler, Jr. and J.~Quinn, \emph{Embeddability and structure properties
  of real curves}, Memoirs of the American Mathematical Society, No. 125,
  American Mathematical Society, Providence, R.I., 1972. \MR{0353278}

\bibitem{ozbolt2020}
Joseph~Stephen Ozbolt, \emph{On {P}lanar {E}mbeddings of the {K}naster
  {$V\Lambda$}-{C}ontinuum}, ProQuest LLC, Ann Arbor, MI, 2020, Thesis
  (Ph.D.)--Auburn University. \MR{4578910}

\end{thebibliography}

\end{document}